%\pdfoutput=1
\documentclass[11pt,reqno]{amsart}
\usepackage{amsmath,amsfonts,amsthm,amssymb,latexsym,amscd,amsopn,eucal,graphics,color,enumitem,lscape}
\usepackage[all,cmtip]{xy}
\usepackage[margin=1in]{geometry}
\usepackage[dvipsnames]{xcolor}
\usepackage[colorlinks]{hyperref}
 \hypersetup{
  colorlinks,
  citecolor=Mulberry,
  linkcolor=Red,
  urlcolor=Green}
\usepackage{comment}
\usepackage{mathrsfs}
\usepackage[T1]{fontenc}

% ---- SHA ----
\DeclareFontEncoding{OT2}{}{} % to enable usage of cyrillic fonts

\def\Sel{{\rm Sel}}

\def\SL{{\rm SL}}
\def\GL{{\rm GL}}
\def\PGL{{\rm PGL}}

\def\Cl{{\rm Cl}}
\def\O{{\mathcal O}}
\def\SO{{\rm SO}}
\def\P{{\mathbb P}}

\def\irr{{\rm irr}}

\def\Vol{{\rm Vol}}
\def\R{{\mathbb R}}

\def\FF{{\mathscr{F}}}
\def\GG{{\mathscr{G}}}

\def\Q{{\mathbb Q}}

\def\Z{{\mathbb Z}}
\def\P{{\mathbb P}}

\def\FF{{\mathscr F}}
\def\Q{{\mathbb Q}}

\def\cE{\mathcal{E}}
\def\p{\mathfrak{p}}

\newcommand{\FunivX}{\FF_{\mathrm{univ}}^{\leq X}}
\newcommand{\FFT}{\FF^{\leq T}}
\newcommand{\Funiv}{\FF_{\mathrm{univ}}}

\newcommand{\floor}[1]{\left\lfloor #1\right\rfloor}
\newcommand{\Avg}{\mathrm{Avg}}
\newcommand{\eps}{\varepsilon}
\newcommand{\rank}{\mathop{\mathrm{rank}}}

\renewcommand{\Cl}{\mathrm{Cl}}

\newcommand{\Nm}{\mathrm{Nm}}
\newcommand{\Norm}{\mathrm{Norm}}
\newcommand{\Frac}{\mathrm{Frac}\,}
\newcommand{\nfrak}{\mathfrak{n}}
\newcommand{\pfrak}{\mathfrak{p}}
\newcommand{\dfrak}{\mathfrak{d}}
\newcommand{\afrak}{\mathfrak{a}}
\newcommand{\Qbar}{\overline{\mathbb{Q}}}

\newcommand{\Prob}{\mathrm{Prob}}
\newcommand{\zfrak}{\mathfrak{z}}
\newcommand{\moment}{r}

%% Theorem Environments
\theoremstyle{plain}
  \newtheorem{theorem}{Theorem}[section]
  \newtheorem{proposition}[theorem]{Proposition}
  \newtheorem{corollary}[theorem]{Corollary}
  \newtheorem{lemma}[theorem]{Lemma}
  
\theoremstyle{definition}
  \newtheorem{definition}[theorem]{Definition}

  \newtheorem{remark}[theorem]{Remark}
  
\theoremstyle{remark}

\usepackage{etoolbox}

\makeatletter
\patchcmd{\@settitle}{\uppercasenonmath\@title}{\Large}{}{}
\patchcmd{\@setauthors}{\MakeUppercase{\authors}}{{\large \authors}}{}{}
\makeatother

%% editing commands

\makeatletter
\let\@@pmod\pmod
\DeclareRobustCommand{\pmod}{\@ifstar\@pmods\@@pmod}
\def\@pmods#1{\mkern4mu({\operator@font mod}\mkern 6mu#1)}
\makeatother

\title[{The second moment of the number of integral points on elliptic curves is bounded}]{The second moment of the number of integral points \\  on elliptic curves is bounded}
\author{Levent Alp\"{o}ge}
\author{Wei Ho}

\begin{document}

\begin{abstract}
Let $K$ be a number field and $S$ a finite set of places of $K$ containing all archimedean places. In this paper, we show that the second moment of the number of $S$-integral points on elliptic curves over $K$ is bounded. In particular, we prove that, for any positive real number $\moment\leq \log_2 5 = 2.3219 \ldots$, the $\moment$-th moment of the number of $S$-integral points is bounded for the family of all integral short Weierstrass curves ordered by height, or for any positive density subfamily thereof. For certain other families of elliptic curves over $\Q$, such as those with one or two marked points, we prove that the average of the number of integral points is bounded; in fact, for the family with one marked point, the $\moment$-th moment is also bounded for all positive $\moment \leq \log_2 3$.

The essential new ingredient in our proof is an upper bound on the number of $S$-integral points on an affine integral Weierstrass model $\cE$ of an elliptic curve $E$ over $K$ depending on the rank of the curve, the class group and degree of $K$, and the number of primes of $K$ whose square divides the discriminant of $\cE$. For example, over $\Q$, the bound for integral points on $\cE$ is $2^{\rank{E(\Q)}} O(1)^s$, where $s$ is the number of prime squares dividing the discriminant of $\cE$.

The theorems on moments then follow from averaging this new upper bound; crucially, we can bound the average of the $2^{\rank}$ term by using results on the average sizes of Selmer groups in the families. In order to prove the bounds for the $\moment$-th moment when $\moment = \log_2 5$ (and the analogous equality cases for the other families), we introduce a method to count orbits of coregular representations with possibly unbounded weights.
\end{abstract}

\maketitle

\section{Introduction}

In this paper, we prove several theorems about the number of integral points on elliptic curves over a number field $K$. We bound the number of integral points using the rank of the elliptic curve, the higher order divisors of its discriminant, and the degree and class number of $K$. For any finite set $S$ of places of $K$ containing all archimedean places, we also obtain an analogous bound for $S$-integral points. Using this bound, which is stronger on average than previous such bounds, we prove that the second moment of the number of ($S$-)integral points on elliptic curves over $K$ is bounded. 

We first give an explicit upper bound on the number of integral points on affine integral Weierstrass models of elliptic curves over $\Q$, depending only on the rank and the number of square divisors of the discriminant of the curve:

\begin{theorem}\label{thm:the bound on the number of integral points}
Let $A,B\in \Z$ be such that $\Delta_{A,B} := -16(4A^3 + 27B^2)\neq 0$. Let $\mathcal{E}_{A,B}$ be the affine integral model $y^2 = x^3 + Ax + B$ of the associated elliptic curve $E_{A,B}$ over $\Q$. Then
$$\left| \mathcal{E}_{A,B}(\Z) \right|  \ll 2^{\rank{E_{A,B}(\Q)}} \prod_{p^2 \mid \Delta_{A,B}} \min\left(4 \floor{\frac{v_p(\Delta_{A,B})}{2}} + 1, 7^{2^7} \right).$$
\end{theorem}

Here $v_p$ denotes the $p$-adic valuation for a prime $p$, and $\mathcal{E}_{A,B}(R)$ denotes the set of solutions $\{(x,y)\in R^2 : y^2 = x^3 + Ax + B\}$ for any ring $R$. By $f\ll g$ we mean that there is a positive absolute constant $c > 0$ such that $|f|\leq c |g|$.

\vspace{\baselineskip}

Our general bound for $S$-integral points for an elliptic curve over a number field $K$ is as follows:

\begin{theorem} \label{thm:optimalgeneralbound}
Fix $C = 7^{2^7}$. Let $K$ be a number field, and let $\O_K$ denote its ring of integers. Let $A,B \in \O_K$ such that $\Delta_{A,B} := -16(4A^3 + 27B^2)\neq 0$. Let $S$ be a finite set of places of $K$ containing all infinite places and all primes $\pfrak$ for which $\pfrak^2 \mid \Delta_{A,B}$, and let $\O_{K,S}$ denote the ring of $S$-integers in $K$, and let $\Cl(R)$ denote the class group of the ring $R$.

Let $\mathcal{E}_{A,B}: y^2 = x^3 + Ax + B$ be an affine Weierstrass model of the elliptic curve $E_{A,B}$ over $K$. Then we have the bound
\begin{equation} \label{eq:generalboundSint}
|\mathcal{E}_{A,B}(\O_{K,S})|\leq 2^{\rank E_{A,B}(K)} C^{2|S|+1} |\Cl(\O_{K,S})[2]|.
\end{equation}
\end{theorem}

\begin{remark} There are several weaker but simpler variants of the bound \eqref{eq:generalboundSint}. First, since $\Cl(\O_{K,S})$ is a quotient of $\Cl(\O_K)$, one may replace $\Cl(\O_{K,S})[2]$ with $\Cl(\O_K)[2]$, or even just $\Cl(\O_K)$.
Moreover, taking $S$ to be as small as possible, i.e., the union of the infinite places and the primes $\p$ with $v_{\p}(\Delta_{A,B})\geq 2$, we obtain the following bound on integral points:
\begin{equation*}
\left| \mathcal{E}_{A,B}(\O_K) \right| \leq 2^{\rank E_{A,B}(K)} C^{2[K:\Q] + 2\omega_{\geq 2}(\Delta_{A,B})+1} \left| \Cl(\O_{K})[2] \right|,
\end{equation*}
where $\omega_{\geq 2}(\Delta_{A,B}) := \#\{\text{primes } \p : v_{\p}(\Delta_{A,B})\geq 2\}$.
\end{remark}

\begin{remark}
The right-hand side of \eqref{eq:generalboundSint} must depend on $S$: if $\rank{E_{A,B}(K)} > 0$, the left-hand side may be made arbitrarily large by expanding $S$.
\end{remark}

\begin{remark}
Although $\cE_{A,B}$ and $\cE_{d^4 A,d^6 B}$ for $0\neq d \in \O_K$ give isomorphic elliptic curves, the latter may have many more integral points. This example shows that there is no uniform bound on the number of $\O_{K,S}$-points for all $\cE_{A,B}$ as $A$ and $B$ vary freely, though it does not rule out such a bound for, say, quasi-minimal short Weierstrass models, i.e., those such that $\mathrm{Norm}_{K/\Q} (\Delta_{A,B})$ is minimized subject to $A, B \in \O_K$.
\end{remark}

Mordell was the first to prove the finiteness of the number of integral points on an elliptic curve (essentially by the invariant-theoretic method we employ in this paper), a theorem generalized by Siegel to all curves of genus $g\geq 1$. Since then, there has been significant work on bounding the number of integral points on elliptic curves.
For example, Helfgott and Venkatesh \cite{helfgottvenkatesh} prove that, for any integral short Weierstrass model $\mathcal{E}_{A,B}$ of the curve $E_{A,B}$ over $\Q$,
\begin{equation} \label{eq:HelfgottVenkateshBound}
\left| \mathcal{E}_{A,B}(\Z) \right|  \ll 1.33^{\rank{E_{A,B}(\Q)}} O(1)^{\omega(\Delta_{A,B})} (\log \left| \Delta_{A,B} \right|)^2
\end{equation}
where $\omega(n)$ denotes the number of distinct prime factors of $n$; they also
give a bound
depending only on a small power of the discriminant, which was recently improved by Bhargava, Shankar, Taniguchi, Thorne, Tsimerman, and Zhao \cite{BSTTTZ} :
\begin{equation} \label{eq:BSTTTZ}
\left| \mathcal{E}_{A,B}(\Z) \right| \ll_\eps \left|\Delta_{A,B}\right|^{0.1117\ldots+\eps}.
\end{equation}
For quasi-minimal short Weierstrass models $\mathcal{E}_{A,B}$ of elliptic curves $E_{A,B}$ over $K$, Silverman \cite{silverman-quantSiegel} shows that
$$\left| \mathcal{E}_{A,B}(\O_{K,S}) \right| \ll O_K(1)^{(1+\rank{E_{A,B}(K)})(1+\omega_{\text{ss}}(\Delta_{A,B}))+|S|}$$
where $\omega_{\text{ss}}(\Delta_{A,B})$ denotes the number of primes of semistable bad reduction and the $O(1)$ is already on the order of $10^{10}$ for $K = \Q$.
In fact, stronger bounds
\begin{equation} \label{eq:hindrysilverman}
\left| \mathcal{E}_{A,B}(\O_{K,S}) \right| \ll O_K(1)^{(1+\rank{E_{A,B}(K)})(1 + \sigma_{A,B}) + |S|},
\end{equation}
and for curves with $h(\Delta_{A,B})\geq \frac{1}{2}h(j(E_{A,B}))$, by David \cite{sinnoudavid}:%combining David's work with \cite[Corollary 4.2]{silverman-quantSiegel} gives the bound
\begin{equation*}
\left| \mathcal{E}_{A,B}(\O_{K,S}) \right| \ll O_K(\sigma_{A,B}^6 (\log{(1 + \sigma_{A,B})})^3)^{1+\rank{E_{A,B}(K)} + |S|}.
\end{equation*}
Here $\sigma_{A,B} := \frac{\log \mathrm{Norm}_{K/\Q} (\Delta_{A,B})}{\log \mathrm{Norm}_{K/\Q} (N_{A,B})}$ is the Szpiro ratio of $E_{A,B}$ over $K$ and $N_{A,B}$ denotes the conductor of $E_{A,B}$.

\begin{remark} \label{rmk:strongeronaverage}
These previous bounds are not suitable for our applications on moments, which we obtain by averaging the bound \eqref{eq:generalboundSint} from Theorem \ref{thm:optimalgeneralbound}.
In particular, we control the first factor in \eqref{eq:generalboundSint} (namely, $2^{\rank}$) using results on the average sizes of $d$-Selmer groups for small $d$, but the implicit constants in, e.g., \eqref{eq:hindrysilverman} are far too large for those average Selmer results to apply. Moreover, the second factor in \eqref{eq:generalboundSint} (namely, $O(1)^{|S|}$) is controlled on average because the size of $S$ depends only on the prime {\em squares} dividing the discriminant, whereas even if \eqref{eq:BSTTTZ} or \eqref{eq:HelfgottVenkateshBound} were generalized to other number fields $K$, both $|\Delta_{A,B}|^{0.1117\ldots+\eps}$ and the $O(1)^{\omega(\Delta_{A,B})} (\log \left| \Delta_{A,B} \right|)^2$ factor diverge on average.
\end{remark}

\begin{remark}
Since the ABC conjecture implies that the Szpiro ratio is at most $6 + o(1)$, the Hindry--Silverman bound \eqref{eq:hindrysilverman} implies that, conditional on ABC and uniform boundedness of ranks for elliptic curves over $K$, the number of $S$-integral points is uniformly bounded for quasi-minimal elliptic curves over a fixed $K$. (In fact, all one needs is Lang's conjecture that $\widehat{h}(P)\gg h(E_{A,B})$ for non-torsion $P\in E_{A,B}(K)$.) Alternatively, Abramovich \cite{abramovich} has shown that the Lang--Vojta conjecture for varieties of log general type implies uniform boundedness of the number of $S$-integral points on so-called stably minimal models of elliptic curves.
\end{remark}

\vspace{\baselineskip}

We use Theorem \ref{thm:optimalgeneralbound} to prove that the second moment of the number of $S$-integral points on elliptic curves over $K$ is bounded.
We consider the family $\Funiv(\O_K)$ of all integral short Weierstrass models
$$\mathcal{E}_{A,B} : y^2 = x^3 + A x + B$$
of elliptic curves over $K$, where $A, B \in \O_K$ with $\Delta_{A,B} \neq 0$, and order this family
by the {\em height}
\begin{equation} \label{eq:ourheightdefinition}
    H(\cE_{A,B}) = H(A,B) := \prod_{v\mid \infty} \max(|A|_v^{1/4}, |B|_v^{1/6}).
\end{equation}
The same boundedness-of-moments results also hold when ordering elliptic curves over $K$ instead by the usual height on the weighted projective line $\P(4,6)$, which gives the height $\widetilde{H}(\cE_{A,B}) := \Norm(I(A,B)) \prod_{v \mid \infty}\max(|A|_v^{1/4}, |B|_v^{1/6})$ where $I(A,B) := \{a\in K : a^4 A, a^6 B\in \O_K\}\subseteq K$. See Remark \ref{rmk:otherheights}.

Not only do we prove that the second moment of the number of integral points in this family is bounded, we obtain the following slightly stronger result. 

\begin{theorem}\label{thm:the bound on the moments}
Fix $C = 7^{2^7}$. Let $K$ be a number field, and let $S$ be a finite set of places of $K$ containing all infinite places.
Let $\FF$ be a subset of $\Funiv(\O_K)$ of positive lower density (when ordering by height), and let $\moment\leq \log_2{5} = 2.3219\dotso$ be a positive real number. We have
\begin{equation} \label{eq:momentbounded}
\Avg_{\FF}(\left|\cE_{A,B}(\O_{K,S})\right|^\moment)\ll_{\FF} \left( C^{2|S|} \left| \Cl(\O_{K,S})[2] \right| \right)^\moment
\end{equation}
where the average is taken over all $\cE_{A,B}\in \FF$ ordered by height.
\end{theorem}

More precisely, let
$$\FF^{\leq T} := \{(A,B)\in \FF : \Delta_{A,B} \neq 0, H(A,B) \leq T \}$$
be the set of all $(A,B)\in \FF$ with height up to $T$.
Then there exists a constant $c_{\FF}$,\footnote{One may take $c_{\FF}$ to be an absolute constant divided by the lower density of the family $\FF$ in $\Funiv(\O_K)$, or even 
$\ll \limsup_{T\to \infty} {|\mathcal{L}^{\leq T}|}/{|\FF^{\leq T}|}$ where $\FF\subseteq \mathcal{L}$ and $\mathcal{L}$ is a ``large family'' in the sense of \cite{BSW-globalfields2}.
}
depending only on $\FF$, such that
$$\limsup_{T\to\infty} \frac{\sum\limits_{(A,B)\in \FF^{\leq T}} \left|\cE_{A,B}(\O_{K,S})\right|^\moment}{\left|\FF^{\leq T}\right|}\leq c_{\FF} \left(C^{2|S|} \left|\Cl(\O_{K,S})[2]\right|\right)^\moment.$$

\begin{remark}
More generally, using Theorem \ref{thm:optimalgeneralbound}, we have that the same moments of $S$-integral points on elliptic curves are bounded even when the set $S$ is allowed to vary with $(A,B)$, as long as the number of primes in the set $S(A,B)$ for each $(A,B)$ does not grow too quickly (indeed, so long as $\Avg\,{t^{|S(A,B)|}} < \infty$ for a sufficiently large constant $t\ll_\moment 1$).
\end{remark}

One expects that elliptic curves should have no ``unexpected points'' on average, i.e., that all these moments should be $0$ (note that the point at infinity is not an integral point).
In \cite{levent-intpts}, it is proved that for $0 < \moment < \log_3 5 = 1.4649\ldots$, the average in \eqref{eq:momentbounded} for integral points on elliptic curves over $K = \Q$ is bounded (and thus by taking $\moment = 1$, that the average number of integral points is bounded, a result also proved by D.~Kim \cite{dohyeoungkim-intpts}).

\begin{remark}
A related but different question is to show that most elliptic curves have very few integral points; perhaps the strongest known result in this direction is that $80\%$ of curves in $\Funiv$ have at most $2$ integral points (by combining the fact that $100\%$ of rank $1$ curves in $\Funiv$ have at most $2$ points \cite[Lemma 20]{levent-intpts} with Bhargava--Shankar's result \cite{arulmanjul-5Sel} that at least $80\%$ of curves in $\Funiv$ have rank $0$ or $1$). Note that these bounds do not imply that the average number of integral points is bounded, since it is a priori possible that there is some small exceptional subset in which the curves have an enormous number of points.
\end{remark}

\begin{remark}
Theorem \ref{thm:the bound on the moments} gives a bound on $\Avg(\left|\cE_{A,B}(\O_{K,S})\right|^\moment)$ for $\moment\leq \log_2{5}$ by using Bhargava--Shankar's bound on the average size of $5$-Selmer groups over this family \cite{arulmanjul-5Sel}. If a bound on the average size of $d$-Selmer groups over this family were known, the same argument would yield a similar bound for all $\moment\leq \log_2 d$.
\end{remark}

\begin{remark} \label{rmk:equalitycase}
The equality case $\moment = \log_2 5$ of Theorem \ref{thm:the bound on the moments} is treated by modifying the proof of Bhargava--Shankar's average $5$-Selmer bound to instead use {\em weighted} $5$-Selmer elements (potentially with unbounded weights); see Section \ref{sec:weights}. To the best of our knowledge, this is the first time that these geometry-of-numbers techniques in arithmetic statistics have been used to count these types of (possibly heavily) weighted orbits of a representation, and we believe that this generalization may be useful in other applications.
\end{remark}

\vspace{\baselineskip} 

For two other families of elliptic curves over $\Q$, namely, the families
\begin{align*}
\FF_1 &:= \{y^2 + d_3 y = x^3 + d_2 x^2 + d_4 x \mid d_2, d_3, d_4 \in \Z, \, \Delta \neq 0 \} \\
\text{and} \qquad \FF_2 &:= \{y^2+ d_1 x y + d_3 y = (x-d_2)(x-d_2')(x-d_2'') \mid \\
& \qquad \qquad  d_1, d_2, d_2', d_2'', d_3 \in \Z,\, d_2 + d_2' + d_2'' = 0,\, \Delta \neq 0 \},
\end{align*}
of elliptic curves in Weierstrass form with one and two marked points, respectively, ordered by an analogous notion of height, we also find that the average of the number of integral points is bounded. 
\begin{theorem} \label{thm:moments for families intro}
Let $\FF$ be a subfamily of elliptic curves in $\FF_1$ (respectively, $\FF_2$) of positive lower density. For any positive real number $\moment \leq \log_2 3 = 1.5850\ldots$ (resp., $\moment \leq 1$), we have
$$\Avg_{\cE \in \FF}( \left| \cE(\Z) \right|^\moment ) \ll_\FF 1$$
where the average is taken over all $\cE\in \FF$ ordered by height.
\end{theorem}

\vspace{\baselineskip}

\subsection*{Method of proof}

Theorem \ref{thm:the bound on the number of integral points} follows from studying a bijection first observed by Mordell between integral points on an integral Weierstrass model $\mathcal{E}_{A,B}$ of an elliptic curve and binary quartics of the form $X^4 + 6 c X^2 Y^2 + 8 d X Y^3 + e Y^4$ with $c, d, e \in \Z$ and invariants $I = -48A$ and $J = -1728B$. The natural map taking the integral point to an element of the $2$-Selmer group of the elliptic curve translates precisely to taking the corresponding binary quartic to its $\PGL_2(\Q)$-equivalence class. By working explicitly (and using results of Bombieri--Schmidt \cite{bombierischmidt} and Evertse \cite{evertse} on Thue equations), we bound the size of a fiber of this map by
$$\ll \prod_{p^2\mid \Delta_{A,B}} \min\left\{ 4 \floor{\frac{v_p(\Delta_{A,B})}{2}} + 1,\, 7^{2^7} \right\}.$$
The image lies in $E_{A,B}(\Q)/2E_{A,B}(\Q)$, whose size is at most $ 4 \cdot 2^{\rank{E_{A,B}(\Q)}}$, giving the theorem. Theorem \ref{thm:optimalgeneralbound} follows the same basic strategy, after generalizing the algebraic constructions to $S$-integral points over $K$ and using the fact that Evertse in fact treats solutions to Thue-Mahler equations over $\O_{K,S}$.
 
We deduce Theorem \ref{thm:the bound on the moments} for $\moment < \log_2 5$ from Theorem \ref{thm:optimalgeneralbound}, H{\"o}lder's inequality, standard analytic techniques, and knowledge of bounds on the average sizes of $5$-Selmer groups in this family (these bounds over $\Q$ come from Bhargava--Shankar's work \cite{arulmanjul-5Sel} and have been extended to number fields $K$ by Bhargava--Shankar--Wang \cite{BSW-globalfields2}). As mentioned in Remark \ref{rmk:equalitycase}, the equality case $\moment = \log_2 5$ is treated by modifying the geometry-of-numbers proof for the average $5$-Selmer bound to count $5$-Selmer elements with large weights.

For the families $\FF_1$ and $\FF_2$, the results on moments, for $\moment \leq \log_2 3$ or $\leq 1$, respectively, follow from the same techniques as for Theorem \ref{thm:the bound on the moments} and bounds on the average $3$-Selmer (resp., $2$-Selmer) group size from \cite{cofreecounting}. Note that the equality case is of most interest for the family $\FF_2$, where it gives an upper bound for the average number of integral points on curves in $\FF_2$. We expect that identical results on bounding moments of integral points for these types of families over any number field hold, by using \cite{BSW-globalfields2} to generalize the average Selmer theorems in \cite{cofreecounting} to number fields.

\begin{remark} While there are other families of elliptic curves with known average $d$-Selmer upper bounds, in many cases (such as the family of Mordell curves and families with marked $2$- or $3$-torsion points), the discriminant often---or even always!---has square factors. We thus cannot directly use these same averaging techniques to prove analogous moment bounds for these families.
\end{remark}

\vspace{0.5\baselineskip}

\subsection*{Acknowledgments} We thank Shabnam Akhtari, Noah Albrecht, Pieter Belmans, Manjul Bhargava, Zev Klagsbrun, Erik Metz, Arul Shankar, and Joe Silverman for helpful comments and conversations. Several questions by Joe Silverman inspired us to add more general results over arbitrary number fields and for $S$-integral points. LA was supported by NSF grant DMS-2002109 and the Society of Fellows, and WH was supported by NSF grants DMS-1701437 and DMS-1844763, the Sloan Foundation, and the Minerva Research Foundation.

\subsection*{Notation}
For the remainder of the paper, let $K$ be a number field and let $\O_K$ be its ring of integers. For a finite set of places $S$ of $K$, let $\O_{K,S}$ be the ring of $S$-integers. An element of $K$ is called {\em integral} if it is in $\O_K$, and {\em $S$-integral} if it is in $\O_{K,S}$.  Any ideal denoted $\pfrak$ is prime. Affine Weierstrass models over $\O_K$ of elliptic curves are denoted $\cE$ or $\cE_{A,B}$, and the associated elliptic curve over $K$ is denoted $E$ or $E_{A,B}$. We will sometimes, by a small abuse of notation, refer to $\cE$ itself as an elliptic curve.

\section{Binary quartic forms and integral points on elliptic curves}

\subsection{Preliminaries on binary quartic forms}
Given a binary quartic form 
	\begin{equation} \label{eq:bq}
	f(X,Y) = a X^4 + b X^3 Y + c X^2 Y^2 + d X Y^3 + e Y^4
	\end{equation}
with coefficients in $K$, the group $\SL_2(K)$ naturally acts by linear substitutions of the variables, i.e., for $g \in \SL_2(K)$, one has
	\begin{equation} \label{eq:SL2action}
	g \cdot f(X,Y) = f((X,Y) \cdot g).
	\end{equation}
There exist degree $2$ and $3$ polynomial invariants $I$ and $J$ that generate the $\SL_2(K)$-invariant ring
as a polynomial ring. The standard normalizations of $I$ and $J$ are as follows:
	\begin{align*} 
	I &= 12 a e - 3 b d + c^2, \\
	J &= 72 a c e - 27 a d^2 - 27 b^2 e + 9 b c d - 2 c^3.
	\end{align*}
The discriminant $\Delta(f) = \frac{1}{27}(4I^3-J^2)$ of $f$ is a polynomial invariant with $\Z$-coefficients. It is well known that if $\Delta(f)$ is nonzero, then the double cover $Z^2 = f(X,Y)$ of $\P^1$ is a genus one curve with Jacobian isomorphic to the elliptic curve given by
	$$y^2 = x^3 - \frac{I}{3} x - \frac{J}{27}.$$
Conversely, a smooth genus one curve over $K$ with a rational degree $2$ divisor or line bundle (thereby giving a degree $2$ map to $\P^1$) has a model of the form $Z^2 = f(X,Y)$ for a binary quartic form $f$ over $K$ with $\Delta(f) \neq 0$.

Let $S$ be a finite set of primes of $K$. We say that a binary quartic form \eqref{eq:bq} is {\em $S$-integral} if $a, b, c, d, e \in \O_{K,S}$ and {\em $S$-integer-matrix} if additionally $4$ divides $b$ and $d$ and $6$ divides $c$. Both conditions are preserved by the action of $\SL_2(\O_{K,S})$. For an $S$-integer-matrix binary quartic form $f$, there are polynomial invariants $I'(f), J'(f)$ with $\Z$-coefficients such that $12 I' = I$ and $432 J' = J$, so that the elliptic curve associated to $f$ is isomorphic to the curve given by
	\begin{equation} \label{eq:ECwithIJprime}
	y^2 = x^3 - 4I' x - 16 J'.
	\end{equation}

In the sequel, we will mostly work with binary quartics of a special type, so we name them as follows:
\begin{definition}
We say a binary quartic form \eqref{eq:bq} is {\em demonic} (for ``depressed monic'') if it is monic with no $X^3 Y$-coefficient, i.e., if $a = 1$, $b = 0$, and $c, d, e \in K$.
\end{definition}

\subsection{Mordell's construction} \label{sec:Mordell}
In \cite[Chapter 25]{mordelldiophantine}, Mordell shows that, given an integral point on an affine Weierstrass model of an elliptic curve $y^2 = x^3 + Ax + B$ with $A,B \in \Z$, there exists an integer-matrix binary quartic form $f(X,Y)$ and $p,q \in \Z$ such that $f(p,q) = 1$ and $I'(f) = -4A$ and $J'(f) = -4B$; however, his construction is not explicit. Conversely, given an integer-matrix binary quartic form $f(X,Y)$ such that $I'$ and $J'$ are multiples of $4$ and $p,q \in \Z$ such that $f(p,q) = 1$, one may explicitly produce (using covariants of $f$) an integral point on the elliptic curve \eqref{eq:ECwithIJprime}.
In the next two subsections, we give a geometric explanation of Mordell's construction (generalized to $S$-integral points on elliptic curves over number fields $K$), which yields an explicit construction of a monic $S$-integer-matrix binary quartic form associated to an $S$-integral point on an elliptic curve.

Let $S$ be a finite set of places of $K$. Let $E$ be an elliptic curve over $K$ with affine $S$-integral Weierstrass model
	\begin{equation} \label{eq:Weierstrass}
	\cE_{A,B}: y^2 = x^3 + A x + B 
	\end{equation}
with $A, B \in \O_{K,S}$. Let $O$ denote the point at infinity. Given a point $P = (x_0,y_0) \in \cE_{A,B}(K)$, the degree $2$ divisor $O + P$ induces a map from $E$ to $\P^1$ as a double cover ramified at a degree four subscheme of $\P^1$. In other words, we obtain a binary quartic form over $K$, which is easily computed \cite{cremonafisherstoll, cofreecounting}:
	\begin{equation} \label{eq:BQfromintpoint}
	f(X,Y) = X^4 - 6 x_0 X^2 Y^2 + 8 y_0 X Y^3 + (-4A-3x_0^2) Y^4.
	\end{equation}
It is easy to check that $I'(f) = -4A$ and $J'(f) = -4B$.

Conversely, given a (demonic) binary quartic of the form $f(X,Y) = X^4 + 6c X^2 Y^2 + 4d X Y^3 + e Y^4$ with $a, b, c, d, e \in \O_{K,S}$, we may easily solve for the coefficients of the elliptic curve and the integral point by equating the coefficients with \eqref{eq:BQfromintpoint}. We obtain the elliptic curve 
	$$E: y^2 = x^3 - \frac{3 c^2 + e}{4} x + \frac{c^3 + d^2 - c e}{4}$$
which contains the point $P = (x_0,y_0) = (-c, d/2)$. Assuming that $I'(f) = 3c^2+e$ and $J'(f) = - c^3 - d^2 + c e$ are both divisible by $4$, we immediately have that $d$ must be divisible by $2$, in which case the elliptic curve $E$ and the point $P$ both have $S$-integral coefficients.

It is clear that these constructions are inverse to one another. We thus obtain the explicit maps for the bijection in the following theorem, a generalization of Mordell's construction: 

\begin{theorem} \label{thm:Mordell}
The following two sets are in bijection:
\begin{enumerate}[label={\upshape(\roman*)}]
\item \label{item:Weierpluspoint} $S$-integral affine Weierstrass models $y^2 = x^3 + Ax + B$ of elliptic curves with $S$-integral points $(x_0,y_0)$ (namely, with $A, B, x_0, y_0 \in \O_{K,S}$),
\item \label{item:demonicBQs} binary quartics $X^4 + 6 c X^2 Y^2 + 8 d X Y^3 + e Y^4$ with $c,d,e \in \O_{K,S}$ and $c^2 - e$ divisible by $4$.
\end{enumerate}
\end{theorem}

Note that the binary quartics in Theorem \ref{thm:Mordell} are $S$-integer-matrix and demonic. In the proof of Theorem \ref{thm:optimalgeneralbound}, we will only need that the set of {\em integral} Weierstrass models with $S$-integral points, which is a subset of the set \ref{item:Weierpluspoint}, injects into the set \ref{item:demonicBQs} of these demonic $S$-integer-matrix binary quartics.

\subsection{Binary quartics with representations of $1$}

We now relate the sets in Theorem \ref{thm:Mordell} with binary quartic forms with representations of $1$, which is Mordell's original correspondence \cite[Chapter 25]{mordelldiophantine} when $K = \Q$ and $S$ contains only the infinite place. This subsection is not needed for proving the main theorems in this paper, but we include it to give a more modern interpretation of Mordell's work.

We show that $S$-integer-matrix binary quartic forms $f(X,Y)$ with an $S$-integral representation of $1$ (i.e., with $p,q \in \O_{K,S}$ with $f(p,q)=1$) may be transformed, under the standard action of $\SL_2(\O_{K,S})$, to demonic $S$-integer-matrix binary quartics. An element $g \in \SL_2(\O_{K,S})$ acts on $f(X,Y)$ by linear transformations as in \eqref{eq:SL2action} and on $(p,q)$ satisfying $f(p,q) = 1$ by $(p,q) \cdot g$.

\begin{lemma} \label{lem:bqrep1}
There is a bijection between demonic $S$-integer-matrix binary quartics
	\begin{equation} \label{eq:monicandzero}
	X^4 + 6c X^2 Y^2 + 4d X Y^3 + e Y^4
	\end{equation}
with $c, d, e \in \O_{K,S}$ and $\SL_2(\O_{K,S})$-equivalence classes of triples $(f,p,q)$, where $f$ is an $S$-integer-matrix binary quartic form and $p,q \in \O_{K,S}$ with $f(p,q) = 1$.

Furthermore, restricting to demonic $S$-integer-matrix binary quartics where $d$ is divisible by $2$ and $c^2 - e$ is divisible by $4$ gives a bijection with triples $(f,p,q)$ where $I'(f)$ and $J'(f)$ are divisible by $4$.
\end{lemma}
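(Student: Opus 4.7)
The plan is to define maps in both directions explicitly and check they are mutually inverse. In the easier direction, I send a flattened integer-matrix binary quartic $f_0 = X^4 + 6cX^2Y^2 + 4dXY^3 + eY^4$ to the triple $(f_0, 1, 0)$, noting that $f_0(1,0) = 1$. In the other direction, given a triple $(f, p, q)$ with $f$ integer-matrix and $f(p,q) = 1$, I will produce a canonical representative of its $\SL_2(\Z)$-orbit of the form $(\tilde f, 1, 0)$ with $\tilde f$ flattened.

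The reduction proceeds in two steps. Since $f(p,q)=1$, we have $\gcd(p,q)=1$, so by Bezout there exists $g \in \SL_2(\Z)$ whose first row is $(p,q)$. Acting by $g^{-1}$ sends $(p,q)$ to $(1,0)$ and transports $f$ to some $\tilde f$, whose $X^4$-coefficient is $\tilde f(1,0) = f(p,q) = 1$, so $\tilde f$ is monic. The integer-matrix property is preserved by $\SL_2(\Z)$, so in particular the $X^3 Y$-coefficient $b$ of $\tilde f$ is divisible by $4$. The stabilizer of $(1,0)$ in $\SL_2(\Z)$ consists of the unipotent matrices $\begin{pmatrix}1 & 0 \\ n & 1\end{pmatrix}$; these act on $\tilde f$ by a shear $X \mapsto X + nY$ (up to sign convention), sending the $X^3Y$-coefficient to $b + 4n$. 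Choosing $n = -b/4 \in \Z$ kills this coefficient, yielding a flattened representative. Well-definedness follows because any $\SL_2(\Z)$-element that stabilizes $(1,0)$ and preserves flattenedness must satisfy $4n = 0$, hence be the identity; thus the flattened representative in a given orbit is unique, and the two maps are inverse bijections.

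For the second part, the key observation is that $I'$ and $J'$ are $\SL_2(\Z)$-invariant, so divisibility by $4$ is an orbit invariant, and may be tested on the flattened representative. Plugging $a=1$, $b=0$, $c'=6c$, $d'=4d$, $e'=e$ into the formulas $I = 12ae - 3bd + c^2$ and $J = 72ace - 27ad^2 - 27b^2e + 9bcd - 2c^3$ gives $I = 12(e + 3c^2)$ and $J = 432(ce - d^2 - c^3)$, so $I' = e + 3c^2$ and $J' = ce - d^2 - c^3$. Then $4 \mid I' \iff e \equiv c^2 \pmod 4$; and assuming this congruence, $c(e - c^2) \equiv 0 \pmod 4$, so $4 \mid J' \iff 4 \mid d^2 \iff 2 \mid d$. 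Conversely $4 \mid I'$ and $4 \mid J'$ together force both congruences. This matches the claimed additional restriction.

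I expect no serious obstacle: the proof is mostly bookkeeping. The most delicate step is the well-definedness of the reduction map on orbits, which hinges on the computation that the unipotent stabilizer of $(1,0)$ shifts the $X^3Y$-coefficient in increments of $4$; one must also be careful with the conventions for left versus right actions of $\SL_2(\Z)$, but this only affects signs.
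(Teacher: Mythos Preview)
Your proof is correct and follows essentially the same approach as the paper's: reduce $(p,q)$ to $(1,0)$ via an $\SL_2(\Z)$ element (using $\gcd(p,q)=1$), then ``complete the quartic'' using the unipotent stabilizer of $(1,0)$ to kill the $X^3Y$-coefficient, with uniqueness coming from the fact that this stabilizer shifts that coefficient in steps of $4$. Your treatment of the second part via the explicit formulas $I'=3c^2+e$, $J'=ce-d^2-c^3$ is exactly what the paper does, though you spell out the congruence deductions in slightly more detail than the paper, which simply declares them trivial.
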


\begin{proof}
Given an $S$-integer-matrix binary quartic form $f(X,Y)$ and $p,q \in \O_{K,S}$ with $f(p,q)=1$, because $p$ and $q$ must generate the unit ideal, there exist $\alpha, \beta \in \O_{K,S}$ with $\alpha p + \beta q = 1$. Since the action of $(\begin{smallmatrix} \alpha & \beta \\ -q & p \end{smallmatrix})$ takes $(p,q)$ to $(1,0)$, there exists an $\SL_2(\O_{K,S})$-transformation taking $f$ to a monic $S$-integer-matrix binary quartic form. Then ``completing the quartic'' (which is possible because of the coefficients of $4$ and $6$ for an $S$-integer-matrix form) shows that there exists a $\SL_2(\O_{K,S})$-transformation of $f$ giving a binary quartic of the form \eqref{eq:monicandzero}.

Given two binary quartics $f$ and $f'$ of the form \eqref{eq:monicandzero}, each with the representation $(p,q) = (1,0)$ of $1$, it is straightforward to check explicitly that there is no nontrivial element of $\SL_2(\O_{K,S})$ taking $(f,1,0)$ to $(f',1,0)$.

The last statement follows trivially since for the binary quartic \eqref{eq:monicandzero}, we compute $I' = 3c^2 + e$ and $J' = -c^3 - d^2 + ce$.
\end{proof}

Combining Lemma \ref{lem:bqrep1} with Theorem \ref{thm:Mordell}, we have the following:

\begin{corollary} \label{cor:Mordellrep1}
The following sets are in bijection:
\begin{enumerate}[label={\upshape(\roman*)}]
\item $S$-integral affine Weierstrass models $y^2 = x^3 + Ax + B$ of elliptic curves with $S$-integral points $(x_0,y_0)$ (namely, with $A, B, x_0, y_0 \in \O_{K,S}$),
\item binary quartics $X^4 + 6 c X^2 Y^2 + 8 d X Y^3 + e Y^4$ with $c,d,e \in \O_{K,S}$ and  $c^2 -e$ divisible by $4$,
\item $\SL_2(\O_{K,S})$-equivalence classes of triples $(f,p,q)$, where $f(X,Y)$ is an $S$-integer-matrix binary quartic form with $4 \mid I'(f)$ and $4 \mid J'(f)$ and $p,q \in \O_{K,S}$ with $f(p,q) = 1$.
\end{enumerate}
\end{corollary}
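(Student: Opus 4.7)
The proof is almost immediate: Corollary \ref{cor:Mordellrep1} is the composition of the two bijections already established, so my plan is simply to concatenate Theorem \ref{thm:Mordell} with Lemma \ref{lem:bqrep1} and reconcile the normalizations of the middle set.

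More specifically, I would first invoke Theorem \ref{thm:Mordell} to identify (i) with (ii). Then I would invoke the second part of Lemma \ref{lem:bqrep1}, which gives a bijection between the flattened integer-matrix binary quartics $X^4 + 6cX^2Y^2 + 4dXY^3 + eY^4$ with $d$ even and $c^2 \equiv e \pmod{4}$ on the one hand, and $\SL_2(\Z)$-equivalence classes of triples $(f,p,q)$ with $4 \mid I'(f)$ and $4 \mid J'(f)$ on the other. The only cosmetic point is that the middle set in Lemma \ref{lem:bqrep1} is parameterized by a quartic of the form $X^4+6cX^2Y^2+4dXY^3+eY^4$ with $d$ even, whereas (ii) in the corollary uses the form $X^4 + 6cX^2Y^2 + 8dXY^3 + eY^4$ with $d$ free; reindexing $d \mapsto 2d$ identifies these two descriptions, and the condition $c^2 \equiv e \pmod 4$ is preserved untouched.

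There is no substantive obstacle here, since both ingredient statements have already been proved, and the translation $d \leftrightarrow 2d$ is trivial. I would end the proof by one sentence noting the chain (i) $\leftrightarrow$ (ii) by Theorem \ref{thm:Mordell}, (ii) $\leftrightarrow$ (iii) by Lemma \ref{lem:bqrep1} (after the aforementioned reindexing), yielding the corollary.
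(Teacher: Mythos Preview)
Your proposal is correct and matches the paper's approach exactly: the corollary is stated immediately after Lemma \ref{lem:bqrep1} with the remark that it follows by combining that lemma with Theorem \ref{thm:Mordell}, and no further proof is given. Your observation about the cosmetic reindexing $d\mapsto 2d$ is the only detail one might add, and it is handled correctly.
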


\section{Counting integral points on elliptic curves}

\subsection{Integral points and Selmer elements}

Let $E$ be an elliptic curve over $K$ with an integral affine Weierstrass model $\cE_{A,B}$ of the form \eqref{eq:Weierstrass}, and let $S$ be a finite set of primes of $K$. We consider the sequence of maps
	\begin{equation} \label{eq:EZtoSel2}
	\Psi \colon \cE_{A,B}(\O_{K,S}) \hookrightarrow E(K) \to E(K)/2E(K) \stackrel{\xi}{\hookrightarrow} \Sel_2(E/K)
	\end{equation}
where $\cE_{A,B}(\O_{K,S})$ denotes the $S$-integral points on $E_{A,B}$ and $\Sel_2(E/K)$ is the $2$-Selmer group of $E$ over $K$.

It is well known that elements of $\Sel_2(E/K)$ may be represented as binary quartic forms $f(X,Y)$ over $K$ such that the Jacobian of the associated genus one curve $C(f): Z^2 = f(X,Y)$ is isomorphic to $E$ and $C$ is locally soluble. More precisely, elements of $\Sel_2(E/K)$ are in bijection with $\PGL_2(K)$-equivalence classes of such binary quartic forms (see, e.g., \cite{bsd, arulmanjul-bqcount, coregular}). The $\PGL_2(K)$-action on binary quartic forms is induced from the following twisted action of $\GL_2(K)$ on binary quartics: for $g \in \GL_2(K)$ and a binary quartic $f(X,Y)$, we have $(g \cdot f)(X,Y) = (\det g)^{-2} f((X,Y) \cdot g)$. The ring of $\PGL_2(K)$-invariants is still the polynomial ring generated by $I$ and $J$.

The map $\xi: E(K)/2E(K) \hookrightarrow \Sel_2(E/K)$ sends a rational point $P \in E(K)$ to the rational binary quartic form arising from the degree $2$ map $E \to \P^1$ given by the divisor $O+P$ (as described in \S \ref{sec:Mordell}).
The composition $\Psi$ of the maps in \eqref{eq:EZtoSel2} is thus given by one direction of the bijection in Theorem \ref{thm:Mordell}, from an $S$-integral point $P = (x_0,y_0)\in \cE_{A,B}(\O_{K,S})$ to the $\PGL_2(K)$-equivalence class of the corresponding $S$-integer-matrix demonic binary quartic form $f_P(X,Y) := X^4 - 6 x_0 X^2 Y^2 + 8 y_0 X Y^3 + (-4A-3x_0^2) Y^4$. Note that the genus one curve $C(f_P)$ associated to such a form (in fact, any monic binary quartic form) is automatically globally soluble over $K$; indeed, $f_P(1,0) = 1$ gives a rational solution. This is not surprising since, by construction, the image of $P$ in $\Sel_2(E/K)$ lies in the subset of globally soluble forms, namely, the image of $E(K)/2E(K)$.

Writing $E(K)\cong \Z^{\rank{E(K)}}\oplus E(K)_{\mathrm{tors}}$, we see that $|E(K)/2E(K)| \leq 4 \cdot 2^{\rank{E(K)}}$. Hence the image of $\xi$, and thus the image of the composition map $\Psi$, is of size
$\leq 4 \cdot 2^{\rank{E(K)}}$.
Therefore, to prove Theorem \ref{thm:optimalgeneralbound}, it suffices to show that the size of each fiber of the map $\Psi$ is bounded as follows:

\begin{proposition}\label{prop:the fiber bound}
Let $K$ be a number field and $S$ a finite set of places of $K$ containing all infinite places of $K$.
Let $f(X,Y) = X^4 + a_2 X^2 Y^2 + a_3 X Y^3 + a_4 Y^4 \in \O_{K,S}[X,Y]$ be a demonic $S$-integer-matrix binary quartic form such that the discriminant $\Delta(f)$ is squarefree in $\O_{K,S}$. Then
$$\left| \{\gamma\in \PGL_2(K) \textrm{ such that } \gamma\cdot f \textrm{ is demonic and $S$-integer-matrix}\} \right|
\leq C^{2|S|+1} |\Cl(\O_{K,S})[2]|,$$
where $C = 7^{2^7}$.
\end{proposition}

Note that the condition that $\Delta(f)$ is squarefree can be arranged by enlarging $S$; this condition is chosen to coincide with the hypothesis in Theorem \ref{thm:optimalgeneralbound} that $S$ contains all $\pfrak$ for which $\pfrak^2 \mid \Delta(f)$.

\subsection{The fiber bound}
To prove Proposition \ref{prop:the fiber bound}, we first establish properties of any $\gamma \in \PGL_2(K)$ that sends a demonic binary quartic form $f$ to another demonic form.
We then show that each such $\gamma$ gives rise to a solution of a Thue--Mahler equation, and invoke the work of Evertse \cite{evertse-on-equations-in-s-units-and-the-thue-mahler-equation} that the number of such solutions is bounded.

\begin{lemma} \label{lem:gammaprops}
Let $K$ be a number field and $S$ a finite set of places of $K$ containing all infinite places of $K$.
Let $f(X,Y) = X^4 + a_2 X^2 Y^2 + a_3 X Y^3 + a_4 Y^4 \in \O_{K,S}[X,Y]$ be a demonic $S$-integral binary quartic form.
For any $\gamma\in \PGL_2(K)$ such that $\gamma\cdot f$ is demonic and $S$-integer-matrix, write $\gamma = \left(\begin{smallmatrix} a & b\\ c& d\end{smallmatrix}\right)$ with $a,b,c,d\in \O_{K,S}$.  Then we have
	\begin{enumerate}[label={\normalfont (\roman*)}]
	\item $(a,b) = (a,b,c,d)$ as ideals of $\O_{K,S}$, and
	\item $f(a,b) = (\det \gamma)^2$ divides $\Delta(f) (a,b)^4$ in $\O_{K,S}$.
	\end{enumerate}
\end{lemma}

\noindent Lemma \ref{lem:gammaprops} follows from localizing and proving a simpler version for principal ideal domains:

\begin{lemma} \label{lem:gammapropsPID}
Let $R$ be a principal ideal domain with field of fractions $k$. Let $f(X,Y) = X^4 + a_2 X^2 Y^2 + a_3 X Y^3 + a_4 Y^4 \in R[X,Y]$ be a demonic $R$-integer-matrix binary quartic form. Let $\gamma\in \PGL_2(k)$ such that $\gamma\cdot f$ is demonic, $R$-integer-matrix, and also in $R[X,Y]$. Write $\gamma =: \left(\begin{smallmatrix} a & b\\ c& d\end{smallmatrix}\right)$ with $a,b,c,d\in R$ and $(a,b,c,d) = (1)$ as ideals of $R$. Then we have
	\begin{enumerate}[label={\normalfont (\roman*)}]
	\item $(a,b) = (1)$ as ideals of $R$, and \label{part:1ofgammapropsPID}
	\item $f(a,b) = (\det \gamma)^2$ divides the discriminant $\Delta(f)$ of $f$ in $R$. \label{part:2ofgammapropsPID}
	\end{enumerate}
\end{lemma}
\noindent Here $f\in R[X,Y]$ is called {\em $R$-integer-matrix} if $4$ divides the coefficients of $X^3 Y$ and $X Y^3$ in $f$ and $6$ divides the coefficient of $X^2 Y^2$ in $f$ as elements of $R$.

\begin{proof}
Write $(a,b) = (g)$, so there exist $\alpha, \beta \in R$ with $a = g \alpha$ and $b = g \beta$. Since $(\alpha, \beta) = (1)$, there exist $\widetilde{\alpha}, \widetilde{\beta} \in R$ such that $\alpha\widetilde{\alpha} - \beta\widetilde{\beta} = 1$.
Let $\widetilde{\gamma} := \left(\begin{smallmatrix} \alpha & \beta\\ \widetilde{\beta}& \widetilde{\alpha}\end{smallmatrix}\right)\in \SL_2(R)$, and let $\eta := c\widetilde{\alpha} - d\widetilde{\beta}\in R$. Define
$$U := \gamma\widetilde{\gamma}^{-1} = \left(\begin{array}{cc} a & b\\ c& d\end{array}\right)\cdot \left(\begin{array}{cc} \widetilde{\alpha} & -\beta\\ -\widetilde{\beta}& \alpha\end{array}\right) = \left(\begin{array}{cc} g & 0\\ \eta& \frac{\det{\gamma}}{g}\end{array}\right),$$ 
implying that
$\gamma = U \widetilde{\gamma}.$

We now show that $g$ divides all the entries of $U$, namely, $g^2 \mid \det{\gamma}$ and $g \mid \eta$. 
Let $\widetilde{f} := \widetilde{\gamma}\cdot f\in R[X,Y]$, with no twisting necessary since $\widetilde{\gamma}\in \SL_2(R)$. Note that $\widetilde{f}$ is $R$-integer-matrix, since the property of being integer-matrix is preserved by the action of $\SL_2(R)$. Write
$$\widetilde{f}(X,Y) =: \widetilde{a}_0 X^4 + \widetilde{a}_1 X^3 Y + \widetilde{a}_2 X^2 Y^2 + \widetilde{a}_3 X Y^3 + \widetilde{a}_4 Y^4\in R[X,Y].$$
Then $(\gamma\cdot f)(X,Y) = (\det \gamma)^{-2} (U\cdot \widetilde{f})(X,Y) = (\det \gamma)^{-2}  \widetilde{f}\left(gX + \eta Y, \frac{\det{\gamma}}{g} Y\right)$. Expanding, we compute that the $X^4$-coefficient in $\gamma\cdot f$ is
$$(\gamma\cdot f)(1,0) = f(a,b) = g^4 f(\alpha,\beta) = \frac{g^4 \widetilde{a}_0}{(\det{\gamma})^2}.$$
Since it is also $1$ by hypothesis, we find that $\frac{(\det{\gamma})^2}{g^4} = \widetilde{a}_0\in R$. Thus $g^4$ divides $(\det{\gamma})^2$, so $g^2$ divides $\det \gamma$.
Now the $X^3 Y$-coefficient of $\gamma \cdot f$ is
$$\frac{4 g^3\eta \widetilde{a}_0 + g^2 (\det{\gamma}) \widetilde{a}_1}{(\det{\gamma})^2} = 0.$$ Substituting for $\widetilde{a}_0$, we find that $\widetilde{a}_1 = -4 \frac{(\det{\gamma}) \eta}{g^3} \in 4 R$ (since $\widetilde{f}$ is $R$-integer-matrix).
Finally, the $X^2 Y^2$-coefficient of $\gamma \cdot f$ is
$$\frac{6 g^2 \eta^2 \widetilde{a_0} + 3 g \eta (\det{\gamma}) \widetilde{a}_1 + (\det{\gamma})^2 \widetilde{a}_2}{(\det{\gamma})^2} = - \frac{6\eta^2}{g^2} + \widetilde{a}_2$$
after substituting for $\widetilde{a}_0$ and $\widetilde{a}_1$.
Since $\widetilde{a}_2\in 6 R$ and this coefficient lies in $6 R$ as well (since both are $R$-integer-matrix), we deduce that $g^2$ divides $\eta^2$, so $g$ divides $\eta$.

Since $g$ divides all the entries of $U$, we see that $g$ divides all the entries of $U\cdot \widetilde{\gamma} = \gamma$, implying that $g$ divides $(a,b,c,d) = (1)$, whence $(g)=(1)$, proving \ref{part:1ofgammapropsPID}.

Now since $g\in R^\times$ it follows that
$\det{\gamma}$ divides $\widetilde{a}_1 = -4 \eta \det \gamma$, and of course $(\det \gamma)^2$ divides $\widetilde{a}_0.$
We thus find that $(\det\gamma)^2$ divides $\Delta(\widetilde{f}) = \Delta(f)$ (since every term of $\Delta(\widetilde{f})$ is a multiple of either $\widetilde{a}_0$ or $\widetilde{a}_1^2$), proving \ref{part:2ofgammapropsPID}.
\end{proof}

\begin{proof}[Proof of Lemma \ref{lem:gammaprops}]
Given the binary quartic form $f(X,Y)$ and $\gamma \in \PGL_2(K)$ as in the lemma, it suffices to show the claim upon localizing. For any prime $\p\not\in S$ of $K$, we may almost directly apply Lemma \ref{lem:gammapropsPID} with $R := \O_{K,S\cup \{\p\}}$ and $k := \Frac{R}$; the only difficulty is that $(a,b,c,d)_\p := (a,b,c,d)\cdot R$ is not necessarily the unit ideal in $R$. However, since $R$ is a discrete valuation ring, the ideal $(a,b,c,d)_\p$ is a nonzero principal ideal $(\delta)$, so we need only divide $a, b, c, d$ by $\delta$ to apply Lemma \ref{lem:gammapropsPID}. Then since $f$ has degree $4$, we obtain $f(a,b) = (\det \gamma)^2 \delta^4$. The desired result for $K$ follows.
\end{proof}

\begin{remark}\label{what squarefreeness buys}
If $\Delta(f)$ is squarefree in $\O_{K,S}$, in the setup of Lemma \ref{lem:gammaprops}, we in fact obtain $(f(a,b)) = (a,b)^4$ as ideals. Indeed, since $f$ is a homogeneous quartic, we have $(a,b)^4 \mid (f(a,b))$, and since $(\det \gamma)^2 = (f(a,b)) \mid (\Delta(f)) (a,b)^4$ and $(\Delta(f))$ is squarefree by hypothesis, we have $(f(a,b)) \mid (a,b)^4$. By unique factorization of ideals, we thus see that $$(\det \gamma) = (a,b)^2,$$ which implies that the ideal $(a,b)$ represents a $2$-torsion class in $\Cl(\O_{K,S})$. This is the source of the factor of $|\Cl(\O_{K,S})[2]|$ in the bound of Proposition \ref{prop:the fiber bound}.
\end{remark}

To prove Proposition \ref{prop:the fiber bound}, the key idea is to relate the fibers of $\Psi$ to $S$-integral solutions to Thue-Mahler equations; the number of such solutions is bounded by work of Bombieri, Bombieri--Schmidt, Evertse, and many others. In particular, we use the following theorem of Evertse:

\begin{theorem}[{\cite[Theorem 3]{evertse-on-equations-in-s-units-and-the-thue-mahler-equation}}]
\label{thm:evertse's theorem} 
Let $K$ be a number field and $\Sigma$ a finite set of places of $K$ containing all infinite places. Let $F\in \O_{K,\Sigma}[x,y]$ be a homogeneous polynomial with at least three distinct roots in $\P^1(\Qbar)$. Then
$$\left| \{(a,b) \in \O_{K,\Sigma}^2 : F(a,b)\in \O_{K,\Sigma}^\times \} / \O_{K,\Sigma}^\times \right|
\leq 7^{(\deg F)^3([K:\Q]+2|\Sigma|)}.$$
\end{theorem}

\begin{proof}[Proof of Proposition \ref{prop:the fiber bound}]
Recall that $\Cl(\O_{K,S})$ is a quotient of $\Cl(\O_K)$. Let $P$ be a set of prime ideals of $\O_K$ for which the canonical map $P\to \Cl(\O_{K,S})$, taking an element of $P$ to its ideal class, is a bijection onto the $2$-torsion subgroup $\Cl(\O_{K,S})[2]$ of $\Cl(\O_{K,S})$. Such a set of representatives exists by Chebotarev's density theorem applied to the Hilbert class field of $K$.

Lemma \ref{lem:gammaprops} shows that for any $\gamma \in \PGL_2(K)$ (represented by $\left(\begin{smallmatrix} a & b\\ c & d\end{smallmatrix}\right)$ with $a,b,c,d\in \O_{K,S}$) such that $\gamma \cdot f$ is demonic and $S$-integer-matrix, we have that $(a,b) = (a,b,c,d)$ as ideals of $\O_{K,S}$ and $f(a,b) = (\det{\gamma})^2$ is a square dividing $\Delta(f)\cdot (a,b,c,d)^4$ in $\O_{K,S}$. Further, by Remark \ref{what squarefreeness buys}, since $\Delta(f)$ is squarefree in $\O_{K,S}$, we have that $(a,b,c,d)^2 = (a,b)^2 = (\det \gamma)$.

Therefore, by the definition of $P$, there is a prime ideal $\p\in P$ and an $\alpha\in K^\times$ for which $\p = \alpha (a,b,c,d)$. Since $\p\subseteq \O_K\subseteq \O_{K,S}$ it follows that $\alpha a, \alpha b, \alpha c, \alpha d\in \O_{K,S}$. Scaling each of $a,b,c,d$ by $\alpha$ (which does not change $\gamma\in \PGL_2(K)$) we may without loss of generality assume that $(a,b,c,d) = (a,b) = \p$ in $\O_{K,S}$.

Thus, given $\gamma\in \PGL_2(K)$ for which $\gamma\cdot f$ is both demonic and $S$-integer-matrix, we get a pair $(a,b)\in \O_{K,S}^2$, well defined up to the action of $\O_{K,S}^\times$ (since $\gamma$ is an equivalence class of matrices in $\GL_2(K)$ modulo scaling by $K^\times$, and we have pinned down the ideal $(a,b)$ via $(a,b) = \p$ with $\p\in P$).

We now claim that the map
\begin{align}
\Phi \colon \{\gamma\in \PGL_2(K) &: \gamma\cdot f \textrm{ is demonic and $S$-integer-matrix}\} \label{eq:Phidef}
\\&\to \bigcup_{\p\in P} \{(a,b)\in \O_{K,S}^2 \mid (a,b) = \p, (f(a,b)) = \p^4\}/\O_{K,S}^\times, \nonumber
\end{align}
taking $\gamma$ as above to the equivalence class of $(a,b)$, defined as above, is injective.

Indeed, if $\gamma,\gamma'\in \PGL_2(K)$ map to the same $(a,b)\in \O_{K,S}^2 / \O_{K,S}^\times$, write $\gamma = \left(\begin{smallmatrix} a & b\\ c& d\end{smallmatrix}\right)$ and $\gamma' = \left(\begin{smallmatrix} a & b\\ c'& d'\end{smallmatrix}\right),$ and note that
$$
\gamma'\gamma^{-1} = \left(\begin{array}{cc} 1 & 0\\ \frac{c'd - cd'}{\det{\gamma}}& 1\end{array}\right).
$$ 
Let $\lambda := \frac{c'd - cd'}{\det{\gamma}}\in K$. Since
$$(\gamma'\cdot f)(X,Y) = ((\gamma'\gamma^{-1}) (\gamma\cdot f))(X,Y) = (\gamma\cdot f)(X + \lambda Y, Y)$$
and both $\gamma\cdot f$ and $\gamma'\cdot f$ are demonic by hypothesis, it follows that $\lambda = 0$ and so $\gamma = \gamma'$, as desired.

Thus, the size of the domain of $\Phi$ is bounded above by the size of the codomain of $\Phi$, which we now bound. Write
$$M_\p := \{(a,b)\in \O_{K,S}^2 \mid (a,b) = \p, (f(a,b)) = \p^4\}.$$
Thus the codomain is $\bigcup_{\p\in P} M_\p / \O_{K,S}^\times$, and we will compute an upper bound for each term $M_\p / \O_{K,S}^\times$.

First, note that the canonical map $M_\p / \O_{K,S}^\times \to M_\p / \O_{K,S\cup \{\p\}}^\times$, taking equivalence classes of elements of $M_\p$ modulo the diagonal action of $\O_{K,S}^\times$ to equivalence classes modulo the action of the larger $\O_{K,S\cup \{p\}}^\times$, is in fact a bijection. This is simply because given $\alpha\in K^\times$ and $(a,b)\in \O_{K,S}$ for which $(a,b) = \alpha (a,b) = \p$ as ideals of $\O_{K,S}$, we have that $\alpha\in \O_{K,S}^\times$.

Next we enlarge $M_\p$ as follows. 
Since $(f(a,b)) = \p^4$ implies that $f(a,b)\in \O_{K,S\cup \{\p\}}^\times$, we observe that
$$M_\p\subseteq \left\{(a,b)\in \O_{K,S\cup \{\p\}}^2 : f(a,b)\in \O_{K,S\cup \{p\}}^\times\right\},$$
and hence
$$M_\p / \O_{K,S\cup \{\p\}}^\times\subseteq \left\{(a,b)\in \O_{K,S\cup \{\p\}}^2 : f(a,b)\in \O_{K,S\cup \{p\}}^\times\right\} / \O_{K,S\cup \{p\}}^\times.$$
Applying Theorem \ref{thm:evertse's theorem} here with $F=f$ and $\Sigma = S \cup \{\p\}$, and using $[K:\Q] \leq 2|S|$, we find
\begin{align*}
\left| M_\p / \O_{K,S}^\times \right|
&= \left| M_\p / \O_{K,S\cup \{\p\}}^\times \right| \\
& \leq \left| \{(a,b)\in \O_{K,S\cup \{\p\}}^2 : f(a,b)\in \O_{K,S \cup \{\p\}}^\times\} / \O_{K,S\cup \{\p\}}^\times \right| \\
&\leq C^{2|S|+1},
\end{align*}
where $C = 7^{2^7}$.

To bound the codomain of $\Phi$, we sum this over $\p\in P$ and use $|P| = |\Cl(\O_{K,S})[2]|$ to obtain
\begin{equation*}
    |\{\gamma\in \PGL_2(K) : \gamma\cdot f \textrm{ is demonic and $S$-integer-matrix}\}| \leq C^{2|S|+1} |\Cl(\O_{K,S})[2]|.
\qedhere
\end{equation*}
\end{proof}
 
When $K = \Q$, the fiber bound may be improved as follows by combining arguments of Bombieri--Schmidt \cite{bombierischmidt} with Evertse's bounds \cite{evertse-on-equations-in-s-units-and-the-thue-mahler-equation} used in the proof of Proposition \ref{prop:the fiber bound}:

\begin{proposition}\label{prop:fiberboundoverQ}
Let $f(X,Y) = X^4 + a_2 X^2 Y^2 + a_3 X Y^3 + a_4 Y^4 \in \Z[X,Y]$ be a demonic binary quartic form. The number of elements $\gamma\in \PGL_2(\Q)$ such that $\gamma\cdot f$ is demonic is
$$\ll \prod_{p^2 \mid \Delta(f)} \min \left\{4 \floor{\frac{v_p(\Delta(f))}{2}} + 1,\, 7^{2^7} \right\}.$$
\end{proposition}

\begin{proof}
By the same argument as in the proof of Proposition \ref{prop:the fiber bound}, here using just Lemma \ref{lem:gammapropsPID} with $R=\Z$, we reduce to bounding
\begin{equation} \label{eq:codomainsize}
\sum_{\delta^2 \mid \Delta(f)} \# \{(a,b)\in \Z^2 : \gcd(a,b) = 1, f(a,b) = \delta^2\}.
\end{equation}

We divide the set of primes $p^2 \mid \Delta(f)$ into two sets to obtain a hybrid bound.
Let $T$ be the set\footnote{Taking $T$ to be the empty set in this argument gives a weaker but simpler upper bound for \eqref{eq:codomainsize}, and thus for Proposition \ref{prop:the fiber bound}, of
$\displaystyle \prod_{p^2 \mid \Delta(f)} \left(4 \floor{\frac{v_p(\Delta(f))}{2}} + 1\right).$} of primes such that 
$C = 7^{2^7} \leq 4 \floor{\frac{v_p(\Delta(f))}{2}} + 1$, and let
$$D := \prod_{\substack{p^2\mid \Delta(f) \\ p\not\in T}} p^{v_p(\Delta(f))}.$$
Given $\delta$ such that $\delta^2\mid \Delta(f)$, set $\nu := \gcd(\delta, D)$ and $\mu := \frac{\delta}{\nu}$.

The argument of Bombieri--Schmidt in \cite[Section VI]{bombierischmidt}, specifically Lemma 7 and the second-to-last paragraph, produces $\leq 4^{\omega(\nu)}$ many quartic forms $f_{\nu, i}$, depending only on $f$ and $\nu$, such that the number of relatively prime solutions to $f(a,b) = \delta^2 =  \nu^2 \mu^2$ is bounded above by the sum of the numbers of relatively prime solutions of $f_{\nu, i}(a,b) = \mu^2$. Rewriting \eqref{eq:codomainsize} in terms of $\mu$ and $\nu$ gives
\begin{align}
\sum_{\delta^2 \mid \Delta(f)} &\# \{(a,b)\in \Z^2 : \gcd(a,b) = 1, f(a,b) = \delta^2\} \nonumber \\
&= \sum_{\nu^2\mid D}\sum_{\mu^2\mid \frac{\Delta(f)}{D}} \# \{(a,b)\in \Z^2 : \gcd(a,b) = 1, f(a,b) = \mu^2 \nu^2\}
\nonumber \\
&\leq \sum_{\nu^2\mid D}\sum_{i=1}^{4^{\omega(\nu)}} \sum_{\mu^2\mid \frac{\Delta(f)}{D}} \# \{(a,b)\in \Z^2 : \gcd(a,b) = 1, f_{\nu, i}(a,b) = \mu^2\}. \label{eq:expandsum}
\end{align}

To bound the number of solutions to $f_{\nu,i}(a,b) = \mu^2$, we use Theorem \ref{thm:evertse's theorem} with $K = \Q$, $\Sigma = T \cup \{\infty\}$, and $F = f_{\nu, i}$. The innermost sum of \eqref{eq:expandsum} is at most
$$
\# \{(a,b)\in \Z^2 : \gcd(a,b) = 1, f_{\nu, i}(a,b)\in \Z[T^{-1}]^\times = \O_{\Q,T}^\times\} \leq C^{|T|+3/2}.
$$
Hence the size \eqref{eq:codomainsize} of the codomain of $\Phi$ is
\begin{align*}
\sum_{\delta^2 \mid \Delta(f)} \# \{(a,b)\in \Z^2 : \gcd(a,b) = 1, f(a,b) = \delta^2\} 
&\ll \sum_{\nu^2\mid D} \sum_{i=1}^{4^{\omega(\nu)}} C^{|T|}
\\&= C^{|T|} \sum_{\nu^2\mid D} 4^{\omega(\nu)}
\\&= C^{|T|} \prod_{p^2\mid D} \left(4 \floor{\frac{v_p(\Delta(f))}{2}} + 1\right). \qedhere
\end{align*}
\end{proof}

\begin{remark} \label{rmk:fourtotwo}
When $2\leq v_p(\Delta(f)) < 4$, evidently $p\not\in S$, and either $p\nmid \nu$ (in which case there is no factor corresponding to $p$) or $v_p(\nu) = 2$. By simply enumerating cases of $f$ over $\Q_p$ one finds that the number of disks required for \cite[Lemma 7]{bombierischmidt} is in fact at most $3$, because at least two roots lie in the same residue disk modulo $p$. This translates into an improvement in the factor for $p$ in Theorem \ref{thm:the bound on the number of integral points} to $4$ (rather than $5$).
\end{remark}

\subsection{Bounds on the number of integral points on an elliptic curve}

Combining Proposition \ref{prop:the fiber bound} with the bound on the image of $\Psi$ gives Theorem \ref{thm:optimalgeneralbound} immediately.

\begin{proof}[Proof of Theorem \ref{thm:optimalgeneralbound}]
We showed that the map $\Psi \colon \cE_{A,B}(\O_{K,S})\to E(K)/2 E(K)\subseteq \Sel_2(E/K)$, taking an integral point of $\cE_{A,B}$ to the $\PGL_2(K)$-equivalence class of its corresponding binary quartic form $f$ (by Theorem \ref{thm:Mordell}), has image of size $\ll 2^{\rank{E(K)}}$. Recall that the binary quartic $f$ has invariants $I = -48A$ and $J = -1728B$, so $\Delta(f) =
2^{8} \Delta_{A,B}$. Applying Proposition \ref{prop:the fiber bound} bounds the size of a fiber of the map $\Psi$, and combining the two estimates gives the theorem.
\end{proof}
Theorem \ref{thm:the bound on the number of integral points} follows from Proposition \ref{prop:fiberboundoverQ} in exactly the same way.

\section{Bounding moments of the number of integral points on elliptic curves} \label{sec:mainthmpf}
Theorem \ref{thm:the bound on the moments}, for $\moment < \log_2 5$, follows from ``averaging'' the bound in Theorem \ref{thm:the bound on the number of integral points} and analytic techniques. The additional crucial input is  Bhargava--Shankar's result that the average size of the $5$-Selmer group of elliptic curves in $\Funiv$, ordered by height, is bounded (proved over $\Q$ in \cite[Theorem 31]{arulmanjul-5Sel} and over global fields in \cite{BSW-globalfields2}):

\begin{theorem}[{\cite{BSW-globalfields2}}]\label{the five selmer bound}
Let $K$ be a number field. Then the average size of the $5$-Selmer group of elliptic curves in $\Funiv(\O_K)$, ordered by height, is
$$\Avg_{\cE_{A,B} \in \Funiv(\O_K)} |\Sel_5(\cE_{A,B})| = 6.$$
\end{theorem}

\begin{remark} \label{rmk:otherheights}
As mentioned in the introduction, the family $\Funiv(\O_K)$ is ordered by the height defined by \eqref{eq:ourheightdefinition}, in both Theorem \ref{the five selmer bound} and in our averaging results. A slightly different height $\widetilde{H}$, which coincides with the usual height on weighted projective space $\P(4,6)$, may also be used, as \cite{BSW-globalfields2} also proves that the average $5$-Selmer size is $6$ for short Weierstrass elliptic curves $E$ over $K$ ordered by $\widetilde{H}$. Since $\widetilde{H}$ is invariant under scaling, i.e., all integral models of a given elliptic curve $E$ over $K$ have the same value of $\widetilde{H}$, we must fix a quasi-minimal integral model for each curve in order to count integral points. Our results and proofs below may be modified appropriately to show that the $\moment$-th moment of the number of $S$-integral points on (quasi-minimal integral models of) elliptic curves over $K$, ordered by $\widetilde{H}$, is bounded, for $0 < \moment \leq \log_2 5$. 
\end{remark}

\subsection{Moments of the number of integral points in families of elliptic curves} \label{sec:averages}

We now prove the following slightly weaker version of Theorem \ref{thm:the bound on the moments}; the remaining case where $\moment = \log_2 5$ will be handled in Section \ref{sec:weights}.

\begin{theorem} \label{thm:momentsbound}
Let $\FF\subseteq \Funiv(\O_K)$ be a subset of positive lower density (ordering by height). Let $S$ be a finite set of places of $K$. Let $0 < \moment < \log_2 5 = 2.3219 \ldots$. Then
$$\Avg_{\FF}( |\cE_{A,B}(\O_{K,S})|^\moment ) \ll_{\moment, \FF} \left( C^{2|S|} \left| \Cl(\O_{K,S})[2] \right| \right)^\moment$$
where the average is taken over all elliptic curves $\cE_{A,B}\in \FF$ ordered by height and $C = 7^{2^7}$.
\end{theorem}

\begin{proof}
We may immediately reduce to the case of $\FF = \Funiv(\O_K)$ because 
$$\sum_{\cE_{A,B}\in \FF^{\leq T}} |\cE_{A,B}(\O_{K,S})|^\moment\leq \sum_{\cE_{A,B}\in \Funiv^{\leq T}(\O_K)} |\cE_{A,B}(\O_{K,S})|^\moment$$
and, by the positive lower density hypothesis,
$$\sum_{\cE_{A,B}\in \FF^{\leq T}} 1\gg_{\FF} \sum_{\cE_{A,B}\in \Funiv^{\leq T}(\O_K)} 1.$$ Thus, we may now assume that $\FF = \Funiv(\O_K)$.

Applying Theorem \ref{thm:optimalgeneralbound} immediately gives
\begin{align*}
\Avg_{\Funiv(\O_K)}&\left( |\cE_{A,B}(\O_{K,S})|^\moment \right) \\
&\ll C^{(2|S| + 1) \moment} |\Cl(\O_{K,S})[2]|^\moment \Avg_{\Funiv(\O_K)}\left((2^\moment)^{\rank{E(K)}} O(1)^{\omega_{\geq 2}(\Delta_{A,B})}\right),
\end{align*}
where $C = 7^{2^7}$ as before.
Thus it suffices to show that $$\Avg_{\Funiv(\O_K)}\left((2^\moment)^{\rank{E(K)}} O(1)^{\omega_{\geq 2}(\Delta_{A,B})}\right)\ll O(1)^{(\log_2{5} - \moment)^{-1}}.$$
Set $\eps := \frac{\log_2{5}}{\moment} - 1 > 0.$ By H\"{o}lder's inequality with dual exponent pair $(1 + \eps, 1 + \eps^{-1})$, we obtain
\begin{align}
\Avg_{\Funiv(\O_K)}&\left((2^\moment)^{\rank{E(K)}} O(1)^{\omega_{\geq 2}(\Delta_{A,B})}\right) \nonumber \\
&\ll \Avg_{\Funiv(\O_K)}\left((2^{\moment (1 + \eps)})^{\rank{E(K)}}\right)^{\frac{1}{1 + \eps}} \Avg_{\Funiv(\O_K)}\left(O(1)^{\omega_{\geq 2}(\Delta_{A,B}) (1 + \eps^{-1})}\right)^{\frac{\eps}{1 + \eps}} \label{eq:afterHolder} \\
&\ll \Avg_{\Funiv(\O_K)}\left(5^{\rank{E(K)}}\right) \Avg_{\Funiv(\O_K)}\left(\left(O(1)^{(\log_2{5} - \moment)^{-1}}\right)^{\omega_{\geq 2}(\Delta_{A,B})}\right)^\eps. \label{eq:productinequality}
\end{align}

We apply Theorem \ref{the five selmer bound} to bound the first average in \eqref{eq:productinequality} by $O(1)$. To bound the second average, we would like to apply Lemma \ref{boundedness of averages of ok weights} below with the discriminant polynomial $\Delta(A,B)$ on elliptic curves $\cE_{A,B} \in \Funiv(\O_K)$ with $H(A,B) \leq X$, $\eta = 1$, and $\lambda = X$. (Note that the weights $\vec{\alpha}$ are irrelevant in this case.)
In order to use Lemma \ref{boundedness of averages of ok weights}, we only need to verify its condition \ref{the probability hypothesis}, specifically that for a squarefree ideal $\dfrak \subseteq \O_K$ with $\Nm\,{\dfrak}\leq X^\delta$,
$$\mathop{\Prob}_{\substack{\cE_{A,B}\in \Funiv(\O_K) \\ H(A,B)\leq X}} \left(\dfrak^2\mid \Delta_{A,B} \right)
\ll O(1)^{\#\{\p\mid \dfrak\}} (\Nm\,{\dfrak})^{-2},$$
which we now check.

First, the number of solutions $(x,y)\in \O_K/\dfrak^2$ of $-16(4x^3 + 27y^2)\equiv 0\pmod{\dfrak^2}$ is
\begin{equation}\label{number of solutions mod prime squares}
\ll \Nm(\dfrak)^2 O(1)^{\#\{\pfrak\mid \dfrak\}}.
\end{equation}
Indeed, by the Chinese remainder theorem, it suffices to check \eqref{number of solutions mod prime squares} when $\dfrak$ is prime, and then upon fixing $x\in (\O_K/\dfrak)^\times$, we find a quadratic in $y$, whose number of solutions is bounded by $O(1)^{\#\{\pfrak\mid \dfrak\}}$ by Hensel lifting.

Also, the set $\{(A,B)\in \O_K^2 : H(A,B)\leq X\}$ covers each congruence class modulo $\dfrak^2$ roughly evenly, since given $(x,y)\in \O_K/\dfrak^2$, we see that
\begin{align}
\#&\left\{ (A,B)\in \O_K^2 :  H(A,B)\leq X \textrm{ and } (A,B)\equiv (x,y) \,(\mathrm{mod}\,\dfrak^2) \right\} \nonumber
\\&\qquad \ll \Nm(\dfrak)^{-4} \#\left\{(A,B)\in \O_K^2 : H(A,B)\leq X\right\}.
\label{equidistribution in congruence classes}
\end{align}
Combining these two observations yields
\begin{align}
\#&\left\{(A,B)\in \O_K^2 : H(A,B)\leq X, \Delta_{A,B}\equiv 0 \,(\mathrm{mod}\,\dfrak^2)\right\} \label{eq:toinsert} \\
&\qquad \ll \Nm(\dfrak)^{-2} O(1)^{\#\{\pfrak\mid \dfrak\}} \# \left\{(A,B)\in \O_K^2 : H(A,B)\leq X\right\}.  
\end{align}
We thus may apply Lemma \ref{boundedness of averages of ok weights} to bound the second average in \eqref{eq:productinequality}, thereby giving the theorem.
\end{proof}

We now prove the lemmas that will give an upper bound for the second average in \eqref{eq:productinequality}, namely, the average of $O(1)^{\omega_{\geq 2}(\Delta)}$ as $\Delta$ varies in a reasonable way. The first lemma shows that a small-norm divisor of the discriminant ideal may be used as a proxy for the discriminant itself when averaging.

\begin{lemma}\label{a small divisor controls everything}
Let $\delta$, $c$, $X$ be positive real numbers. Let $\zfrak\subseteq \O_K$ be an ideal with $\Nm(\zfrak)\leq X^c$. Then there exists a squarefree ideal $\dfrak$ such that $\dfrak^2 \mid \zfrak$ with
\begin{enumerate}[label={\upshape(\roman*)}]
\item \label{condition:dfrak1}
$\Nm(\dfrak)\leq X^\delta$, and 
\item \label{condition:dfrak2}
$\#\{\pfrak^2\mid \zfrak\} \leq (2 + \delta^{-1} c) \#\{\pfrak^2\mid \dfrak\}$. 
\end{enumerate}
\end{lemma}

\begin{proof}
Note that $\#\{\pfrak^2 \mid \zfrak : \Nm(\pfrak) > X^{\delta/2}\} \ll \delta^{-1} c$. Let $$\nfrak = \prod_{\substack{\pfrak^2 \mid \zfrak \\ \Nm\,{\pfrak}\leq X^{\delta/2}}} \pfrak,$$ so $\Nm(\nfrak)\leq X^{{c}/{2}}$ since $\nfrak^2\mid \zfrak$.
Thus if $\Nm(\nfrak)\leq X^\delta$, then taking $\dfrak = \nfrak$ suffices.

We may now assume that $\Nm(\nfrak) > X^\delta$. First, for every divisor $\nfrak'$ of $\nfrak$, we claim there exists an ideal $\dfrak'\mid \nfrak'$ for which
\begin{equation} \label{eq:condition-sizeofdprime}
\min(X^{\delta/2}, \Nm(\nfrak'))\leq \Nm(\dfrak')\leq X^\delta.
\end{equation}
This follows from the fact that $\Nm(\pfrak)\leq X^{\delta/2}$ for each $\pfrak\mid \nfrak$ by construction. We simply start with $\nfrak$ and remove one prime $\pfrak$ at a time until \eqref{eq:condition-sizeofdprime} is satisfied. We call this the ``deletion argument''.

An analogous argument shows that for every divisor $\nfrak'$ of $\nfrak$ for which $\Nm(\nfrak')\leq X^{\delta/2}$, there is an ideal $\dfrak'$ such that $\nfrak'\mid \dfrak'\mid \nfrak$ and $X^{\delta/2}\leq \Nm(\dfrak') \leq X^\delta$. Here, we instead add one prime $\pfrak$ at a time (which we can do because $\Nm\,{\nfrak} > X^\delta$) until the norm condition is satisfied; call this the ``insertion argument''.

Let $\dfrak$ be a divisor of $\nfrak$ for which $X^{\delta/2}\leq \Nm(\dfrak)\leq X^\delta$ (and thus satisfying \ref{condition:dfrak1}), for which $\#\{\pfrak\mid \dfrak\}$ is maximized subject to this condition, namely,
$$\dfrak := \mathrm{argmax} \left\{\#\{\pfrak\mid \dfrak'\} : X^{\delta/2}\leq \Nm(\dfrak')\leq X^\delta\right\}.$$ 
Note that we are maximizing over a nonempty finite set (by the deletion argument).

Let $\afrak$ be the ideal such that $\nfrak = \afrak \dfrak $. Because $\Nm(\nfrak) > X^\delta$ by assumption, we have $\Nm(\afrak) > 1$. By repeatedly applying the deletion argument to $\afrak$ as needed, we may decompose
\begin{equation} \label{eq:decomposition}
\nfrak =  \afrak_1 \cdots \afrak_N \dfrak
\end{equation}
with $X^{\delta/2}\leq \Nm(\afrak_i)\leq X^\delta$ for all $i < N$ and $\Nm(\afrak_N) < X^{\delta/2}$; note that $N \geq 1$ since $\afrak\neq (1)$. Since
$$X^{\delta (N-1)/2}\leq \Nm(\nfrak)\leq X^{{c/2}},$$
we conclude that $1\leq N\leq 1 + \delta^{-1} c$. Moreover, our choice of $\dfrak$ gives
$$\#\{\pfrak\mid \dfrak\} \geq \#\{\pfrak\mid \afrak_i\}$$
for \emph{all} $1 \leq i \leq N$: this follows for $i < N$ from the maximality imposed in the definition of $\dfrak$, and for $i = N$ from this maximality and the insertion argument. Hence the decomposition \eqref{eq:decomposition} gives
$$\#\{\pfrak\mid \nfrak\} \leq (N+1) \#\{\pfrak\mid \dfrak\},$$
so \ref{condition:dfrak2} is satisfied as well, which proves the lemma.
\end{proof}

We now establish a general lemma that is used both in Theorem \ref{thm:momentsbound} and in Section \ref{sec:weights}. Given a polynomial $\Delta(v_1,\ldots,v_n)$ whose behavior on a bounded set of $\vec{v}$ is somewhat controlled, we obtain an upper bound for the average size of a constant raised to the number of primes whose squares divide $\Delta(\vec{v})$, as $\vec{v}$ ranges over that set.

\begin{lemma}\label{boundedness of averages of ok weights}
Let $K$ be a number field. Let $\lambda$, $t$, $\eta$, $C_1$, $C_2$, and $C_3$ be positive real numbers, and suppose $\lambda \geq 1$. Let $N$ be a positive integer, and fix $\vec{\alpha} \in (\R^+)^N$ giving an action of $\R^+$ on $\vec{u} \in (K\otimes_\Q \R)^N$ by $\lambda\cdot {\vec{u}} := (\lambda^{\alpha_i} u_i)_i$.
Fix a bounded open subset $\mathcal{S} \subseteq (K\otimes_\Q \R)^N$.

Suppose $\Delta\in \O_K[X_1, \ldots, X_N]$ satisfies
\begin{enumerate}[label={\upshape(\alph*)}]
\item $\Delta(\lambda \cdot \vec{u})\leq C_1 \lambda^{C_2}$ for all $\vec{u}\in \mathcal{S}$, and
\item \label{the probability hypothesis} for all $\delta > 0$ with $\delta\ll_{\eta, \vec{\alpha}} 1$ and squarefree ideals $\afrak\subseteq \O_K$ with $\Nm(\afrak) \leq \lambda^\delta$,
\begin{equation}
\mathop{\Prob}_{\vec{v}\in \O_K^N\cap \lambda\cdot \mathcal{S}}\left(\afrak^2\mid \Delta(\vec{v})\right)
\leq C_3^{\#\{\pfrak\mid \afrak\}} (\Nm(\afrak))^{-1-\eta}.
\end{equation}
\end{enumerate}
Then
\[
\mathop{\Avg}_{\vec{v}\in \O_K^N\cap \lambda\cdot \mathcal{S}} \left(t^{\#\{\p^2\vert \Delta(\vec{v})\}}\right)\leq t^{O(1)} \zeta_K(1 + \eta)^{C_3 t^{O(1)}}
\]
where both $O(1)$'s depend on $\eta$, $\vec{\alpha}$, $C_1$, and $C_2$.
\end{lemma}

\begin{proof}
Fix $\delta\asymp_{\eta, \vec{\alpha}} 1$. For each $\vec{v}\in \O_K^N\cap \lambda\cdot \mathcal{S}$, we apply 
Lemma \ref{a small divisor controls everything} with $\zfrak = \Delta(\vec{v})$ to find a squarefree ideal $\dfrak(\vec{v})$ such that $\dfrak(\vec{v})^2 \mid (\Delta(\vec{v}))$ with
\begin{enumerate}[label={\upshape(\roman*)}]
    \item $\Nm\,{\dfrak(\vec{v})} \leq \lambda^\delta$, and
    \item \label{condition:dfrak2inlemma}
    $\#\{\pfrak^2\mid \Delta(\vec{v})\} \ll_{C_1, C_2} \delta^{-1} (\#\{\pfrak\mid \dfrak(\vec{v})\} + 1)$.
\end{enumerate}
In the following computation, every instance of $O(\cdot)$ is dependent on $C_1$ and $C_2$, which we omit for notational convenience:
\begin{align*}
\sum_{\vec{v}\in \O_K^N\cap \lambda\cdot \mathcal{S}} t^{\#\{\pfrak^2\mid \Delta(\vec{v})\}}
&\leq t^{O(\delta^{-1})}\sum_{\substack{\dfrak\text{ squarefree} \\ \Nm\,{\dfrak}\leq \lambda^\delta}} t^{O(\delta^{-1} \#\{\pfrak\mid \dfrak\})} \sum_{\substack{\vec{v}\in \O_K^N\cap \lambda\cdot \mathcal{S} \\ \dfrak^2 \mid \Delta(\vec{v})}} 1
& \text{by \ref{condition:dfrak2inlemma}}
\\
&\leq t^{O(\delta^{-1})} \left(\sum_{\Nm\,{\dfrak}\leq \lambda^\delta} \frac{(C_3 t^{O(\delta^{-1})})^{\#\{\pfrak\mid \dfrak\}}}{(\Nm\,{\dfrak})^{1+\eta}}\right)\left(\sum_{\vec{v}\in \O_K^N\cap \lambda\cdot \mathcal{S}} 1\right)
& \text{by \ref{the probability hypothesis}}
\\
&\leq t^{O(\delta^{-1})} \left(\sum_{\dfrak\subseteq \O_K} \frac{(C_3 t^{O(\delta^{-1})})^{\#\{\pfrak\mid \dfrak\}}}{(\Nm\,{\dfrak})^{1+\eta}}\right)\left(\sum_{\vec{v}\in \O_K^N\cap \lambda\cdot \mathcal{S}} 1\right)
\\
&\leq t^{O(\delta^{-1})} \zeta_K(1+\eta)^{C_3 t^{O(\delta^{-1})}} \sum_{\vec{v}\in \O_K^N\cap \lambda\cdot \mathcal{S}} 1,
\end{align*}
where the last inequality uses the fact that $(1+a)^b \geq 1 + ab$ for $a, b > 0$.
\end{proof}

\subsection{Families of elliptic curves with marked points} \label{sec:otherfamilies}

The arguments in Section \ref{sec:averages} may be modified appropriately to give averages or moments of the number of integral points on elliptic curves over $\Q$ in some other families for which we have finite upper bounds on the average $d$-Selmer group size for some $d \geq 2$. These include the families
\begin{align*}
\FF_1 &= \{y^2 + d_3 y = x^3 + d_2 x^2 + d_4 x \mid d_2, d_3, d_4 \in \Z, \, \Delta \neq 0 \} \qquad \text{and} \\
\FF_2 &= \{y^2+ d_1 x y + d_3 y = (x-d_2)(x-d_2')(x-d_2'') \mid d_1, d_2, d_2', d_2'', d_3 \in \Z, \, d_2 + d_2' + d_2'' = 0, \, \Delta \neq 0 \}
\end{align*}
of elliptic curves over $\Q$.
The family $\FF_1$ has a marked point at $(0,0)$, and the family $\FF_2$ has two marked points; these points are of infinite order and independent $100\%$ of the time \cite[Section 10]{cofreecounting}. The height $H(\cE)$ of a curve $\cE$ in these families is again a measure of the size of the coefficients, defined as $\max \left\{|d_i|^{\frac{12}{i}}\right\}$ (for $\FF_2$, we include $|d_2'|^6$ and $|d_2''|^6$ in that maximum). By \cite[Theorem 1.1]{cofreecounting}, the average size of the $3$-Selmer group (resp., $2$-Selmer group) in $\FF_1$ (resp., $\FF_2$), ordered by height, is bounded. We claim that the average number of integral points on the curves in these families is bounded, and in fact, a stronger statement holds:

\begin{theorem}\label{moment theorem for other families}
For any positive lower density family $\FF$ in $\FF_1$ (respectively, $\FF_2$) and any positive real number $\moment \leq \log_2 3 = 1.5850\ldots$ (resp., $\moment \leq 1$), we have
$$\Avg_\FF( \left| \cE(\Z) \right|^\moment ) \ll_{\moment,\FF} 1$$
where the average is taken over all $\cE\in \FF$ ordered by height.
\end{theorem}

\begin{proof}
We first explain the case of $\FF_1$. For $\moment < \log_2{3}$, the proof follows the same outline as that of Theorem \ref{thm:momentsbound}. As before, we reduce to the case of $\FF = \FF_1$. Let $\FFT := \{ \cE \in \FF : \Delta_{\cE} \neq 0 \textrm{ and } H(\cE) \leq T\}$ represent the curves in $\FF$ of height at most $T$. Given an integral model $\cE \in \FF$, let $E$ over $\Q$ be the corresponding elliptic curve. The bound of Theorem \ref{thm:the bound on the number of integral points} and H\"older's inequality give an inequality analogous to \eqref{eq:afterHolder}:
\begin{equation}
\sum_{\cE\in \FFT} |\cE(\Z)|^\moment
\ll \left(\sum_{\cE\in \FFT} (2^{\moment(1+\eps)})^{\rank{E(\Q)}}\right)^{\frac{1}{1+\eps}}
\left(\sum_{\cE\in \FFT} O(1)^{\omega_{\geq 2}(\Delta_{\cE})\left(1 + \eps^{-1}\right)}\right)^{\frac{\eps}{1+\eps}} \label{eq:afterholder2}
\end{equation}
The first term is bounded as before, by choosing $0 < \eps < \frac{\log_2 3}{s} - 1$ so that
$$
\left(2^{  \moment (1+\eps)} \right)^{\rank{E(\Q)}}\leq 3^{\rank{E(\Q)}} \leq \left|E(\Q)/3E(\Q)\right| \leq \left|\Sel_3(E)\right|.
$$
The bounds on the average $3$-Selmer size from \cite{cofreecounting} imply that
$$
\sum_{\cE\in \FFT} \left(2^{\moment(1+\eps) } \right)^{\rank{E(\Q)}} \ll \left| \FFT \right|.
$$
To bound the second term in \eqref{eq:afterholder2}, we again note that, by Lemma \ref{boundedness of averages of ok weights}, it suffices to check that \begin{equation}\label{the hypothesis we need to check in the other families proof}
\mathop{\Prob}_{\cE\in \FF^{\leq T}} \left(m^2\mid \Delta_{\cE} \right)\ll O(1)^{\omega(m)} m^{-2}
\end{equation}
when $m < T^\delta$ is squarefree.
First, for $\FF = \FF_1$ and $m < T^{\delta}$, each fiber of the natural reduction map $\FFT \to (\Z/m^2\Z)^3$ sending $E \in \FF_1$ to $(d_2, d_3, d_4)$ modulo $m^2$ is of size 
$$\ll \left(\frac{T^{1/6}}{m^2}+1\right)\left(\frac{T^{1/4}}{m^2}+1\right)\left(\frac{T^{1/3}}{m^2}+1\right) \ll \left|\FFT\right| m^{-6}.$$
Moreover, there are $\ll m^4 O(1)^{\omega(m)}$ solutions $(d_2, d_3, d_4)$ modulo $m^2$ to the discriminant vanishing modulo $m^2$. 
Indeed, by the Chinese remainder theorem, it suffices to verify this for $m = p$ prime. In that case, by Hensel's lemma, for each of the $\ll p^2$ many mod $p$ solutions with a nonvanishing differential, there are $p^2$ lifts to a mod $p^2$ solution, whereas there are $\ll p$ many mod $p$ solutions with a vanishing differential (and trivially $\leq p^3$ lifts of each to a mod $p^2$ solution).
We thus obtain the bound (\ref{the hypothesis we need to check in the other families proof}).

The argument for $\FF = \FF_2$ is entirely analogous, using the upper bound on the average size of the $2$-Selmer group for curves in $\FF_2$ from \cite{cofreecounting}. The equality cases $(\FF, \moment) = (\FF_1, \log_2 3)$ or $(\FF_2, 1)$ are proven in Theorem \ref{weighted two selmer average bound} below.
\end{proof}

\section{Arithmetic statistics with weights} \label{sec:weights}

We introduce a method to leverage the ``usual'' geometry-of-numbers techniques for proving bounds on average in arithmetic statistics to obtain bounds for {\em weighted} averages, provided the relevant weight function is sufficiently well behaved. Rather than formulate a general theorem, we apply the method to bound the weighted average of the number of $d$-Selmer elements of elliptic curves $\cE\in \FF$ ordered by height, with each $d$-Selmer element weighted by $O(1)^{\omega_{\geq 2}(\Delta_{\cE})}$, when $(\FF,d)\in \{(\Funiv(\O_K),5),  (\FF_1,3), (\FF_2,2)\}$.

These weighted averages then give the equality cases of $\moment = \log_2 d$ for Theorems \ref{thm:the bound on the moments} and \ref{thm:moments for families intro}. As previously noted, the equality case of most interest here is likely that of $\FF_2$, since it gives a bound for the average number of integral points on curves in $\FF_2$. However, we note that in fact these equality cases recover all of Theorems \ref{thm:the bound on the moments} and \ref{thm:moments for families intro}, since they imply that the $\moment$th moment for all $\moment < \log_2 d$ will also be bounded.

Let us first briefly sketch the key ideas in the unweighted counting method that we will be modifying. In some recent papers such as \cite{arulmanjul-bqcount, arulmanjul-tccount, arulmanjul-4Sel, arulmanjul-5Sel, cofreecounting}, the main goal is to count (average) the $d$-Selmer elements ($d = 2$, $3$, $4$, or $5$) of elliptic curves in either $\Funiv$ or other families; these Selmer elements are parametrized by orbits---with both global and local conditions---of a group $G(\Q)$ acting on a representation $V(\Q)$. It is possible to count the required rational orbits by counting (appropriately weighted) integral orbits, each of which corresponds to a lattice point in a fundamental domain. Davenport's Lemma roughly says that the number of such lattice points is the volume of the domain, or at least the well-behaved part of the domain; other methods are used to count the lattice points in the remaining cusps. One thereby obtains an asymptotic count of the number of lattice points in the fundamental domain, and after applying sieves to impose the necessary local conditions, these counts give the desired Selmer averages.

In this section, we upper bound the average number of $d$-Selmer elements with weights of $O(1)^{\omega_{\geq 2}(\Delta_\cE)}$. We interpret this weighted average as a weighted average of the relevant vectors in the lattice $V(\Z)$ (roughly, those in the fundamental domain), and the weights (in fact, the discriminants) depend only on the $G(\Q)$-invariants of the vector. This weighted average is an integral over the fundamental domain, and we split this integral into two pieces, depending on the size of the so-called torus parameters appearing in our description of points in the fundamental domain. For large torus parameters, we use a pointwise bound on the weight function to reduce to integrating the unweighted volumes over this region that is ``polynomially high in the cusp.'' For small torus parameters, we may use Davenport's Lemma to obtain enough equidistribution over congruence classes to apply Lemma \ref{boundedness of averages of ok weights}, which then bounds the integrand by the unweighted count.

\subsection{Weighted averages of $5$-Selmer elements of elliptic curves} \label{sec:weightuniv}

\begin{theorem}\label{the five selmer equality case}
Let $\FF\subseteq \Funiv(\O_K)$ be a subset of positive lower density (ordering by height). Let $S$ be a finite set of places of $K$ containing all infinite places. Let $\moment = \log_2 5 = 2.3219 \ldots$. Then
$$\Avg_{\FF}( |\cE_{A,B}(\O_{K,S})|^\moment) \ll_{\FF} \zeta_K(2)^{O(1)} O(1)^{|S|} |\Cl(\O_{K,S})|^\moment,$$
where the average is taken over all $\cE_{A,B}\in \FF$ ordered by height.
\end{theorem}

Theorem \ref{the five selmer equality case} follows immediately from the following weighted average bound, in exactly the same way as the beginning of the proof of Theorem \ref{thm:momentsbound}.

\begin{theorem}\label{weighted five selmer average bound}
$$\Avg_{\Funiv(\O_K)}\left(O(1)^{\omega_{\geq 2}(\Delta_{A,B})} |\Sel_5(E_{A,B}/K)|\right)\ll \zeta_K(2)^{O(1)}$$ where the average is taken over all $\cE_{A,B}\in \Funiv(\O_K)$ ordered by height.
\end{theorem}

\begin{proof}
We take $K = \Q$ for convenience; the argument is exactly the same for general number fields $K$ (since Lemma \ref{boundedness of averages of ok weights} is phrased in that generality), but when $K = \Q$, we may give precise citations to the intermediate results in \cite{arulmanjul-5Sel} rather than \cite{BSW-globalfields2}. So for the rest of the proof, let $K = \Q$. Let $\FunivX(\O_K)$ denote the elliptic curves $\cE_{A,B} \in \Funiv(\O_K)$ with $H(A,B) \leq X$, ordered by height.

To show the boundedness of this average, since $5^{\rank{E(K)}}\leq 1 + (|\Sel_5(E/K)| - 1)$ and because we have shown in the course of the proof of Theorem \ref{thm:momentsbound} that
$$\Avg_{\cE_{A,B} \in {\FunivX(\O_K)}}\left(O(1)^{\omega_{\geq 2}(\Delta_{A,B})}\right)\ll \zeta_K(2)^{O(1)},$$
it suffices to prove that
\begin{equation} \label{eq:nontrivial5selmeravg}
\Avg_{\cE \in \FunivX(\O_K)}\left(\sum_{1\neq \alpha\in \Sel_5(E/K)} O(1)^{\omega_{\geq 2}(\Delta(\alpha))}\right)
\end{equation}
is similarly bounded, i.e., where we only consider non-identity $5$-Selmer elements. 

Let $V(K) = K^5\otimes \wedge^2(K^5)$ and $G(K)$ be the subquotient of $\GL_5(K)^2$ defined by $\{(g_1,g_2) \in \GL_5(K) \times \GL_5(K) : \det(g_1)^2 \det g_2 = 1\} / \{(\lambda I_5, \lambda^{-2} I_5): \lambda \in K^\times\}$. Then elements of the $5$-Selmer group of an elliptic curve $\cE_{A,B}$ are in bijective correspondence with $G(K)$-orbits in the set of locally soluble $\vec{v} \in V(K)$ with invariants equal to $-3A$ and $-27B$ \cite{fishersadek-5sel}, and the usual discriminant $\Delta(\cE_{A,B})$ equals the discriminant $\Delta(\vec{v})$ of $\vec{v}$ under this correspondence. By relating rational and integral orbits, counting nontrivial $5$-Selmer elements can be translated into counting so-called {\em strongly irreducible} locally soluble $G(\Z)$-orbits of $V(\Z)$, as in \cite{arulmanjul-5Sel,BSW-globalfields2}.

For the rest of this proof, we adopt notation from \cite{arulmanjul-5Sel} as needed. For example, let $\mathcal{F}$ be the fundamental domain for the left action of $G(\Z)$ on $G(\R)$ written as $\{nak : n \in N'(a), a \in A', k \in K \}$, where $K = \SO_5(\R)^2$, $A' = \{a(s_1,\ldots,s_8) : s_i > c\}$ is a certain subset of pairs of diagonal matrices ($c > 0$ is an absolute constant), and $N'$ is a certain subset of pairs of lower triangular matrices, as defined in \cite[p.~8]{arulmanjul-5Sel}. Let $G_0 \subset G(\R)$ be a compact semialgebraic left $K$-invariant set that is the closure of an open nonempty set. Let $dh = dn\, d^*a\, dk$ be the Haar measure on $G(\R)$, suitably normalized, and set $C_{G_0} = 5 \int_{h \in G_0} dh$. Let $V^+$ and $V^-$ denote the subsets with positive and negative discriminant, respectively, and let $(V(\Z)^{\pm})^{\irr}$ denote the strongly irreducible elements in $V(\Z)^{\pm}$. For a subset $\mathcal{S}$ of $V(\Z)$, let $N(\mathcal{S};X)$ denote the number of $\vec{v} \in \mathcal{S}$ of height up to $X$.
Let $R^\pm$ be fundamental sets for the action of $G(\R)$ on $V^{\pm}(\R)$ as in \cite[(4) on p.~8]{arulmanjul-5Sel}, and $R^\pm(X)$ the subset of $R^\pm$ of height up to $X$. Finally, let $B^{\pm}(n,a;X)$ be the multiset $naG_0 \cdot R^{\pm}(X)$.

Now since nontrivial $5$-Selmer elements correspond to locally soluble strongly irreducible $G(\Q)$-orbits, whence fewer in number than strongly irreducible $G(\Z)$-orbits, we may upper bound \eqref{eq:nontrivial5selmeravg} by
$$\widetilde{N}(V(\Z)^\pm; Y) := \frac{1}{C_{G_0}}\int_{na\in \mathcal{F}} \sum_{\vec{v}\in B^\pm(n,a; Y)\cap (V(\Z)^\pm)^\irr} O(1)^{\omega_{\geq 2}(\Delta(\vec{v}))} \, dn \, d^*a$$
with $Y\asymp X^{12}$, as in \cite[(8) on p.~9]{arulmanjul-5Sel} (but replacing the weight function $1$ by $O(1)^{\omega_{\geq 2}(\Delta(\vec{v}))}$ and noting that their height $H(\vec{v})\gg H(A,B)^{12}$). We now split the integral by the size of the torus parameter $\vec{s}$ that determines $a = a(\vec{s}) \in A'$:
\begin{align}
\widetilde{N}(V(\Z)^\pm; Y)
&= \frac{1}{C_{G_0}}\left(\int_{na\in \mathcal{F} : ||\vec{s}||_\infty\leq Y^\eta} + \int_{na\in \mathcal{F} : ||\vec{s}||_\infty > Y^\eta}\right) \sum_{\vec{v}\in B^\pm(n,a; Y)\cap (V(\Z)^\pm)^\irr} O(1)^{\omega_{\geq 2}(\Delta(\vec{v}))} \, dn \, d^*a
\nonumber \\
&\leq \frac{1}{C_{G_0}}\int_{na\in \mathcal{F} : ||\vec{s}||_\infty\leq Y^\eta} \sum_{\vec{v}\in B^\pm(n,a; Y)\cap V(\Z)^\pm} O(1)^{\omega_{\geq 2}(\Delta(\vec{v}))} \, dn \, d^*a
\nonumber\\
&\qquad\qquad + O\left(Y^{o(1)} \int_{na\in \mathcal{F} : ||\vec{s}||_\infty > Y^\eta} |B^\pm(n,a; Y)\cap (V(\Z)^\pm)^\irr| \, dn \, d^*a\right),
\label{eq:Ntildefirstinequality}
\end{align}
where $\eta\in \R^+$ with $\eta\asymp 1$ a small constant. 

To bound the second summand in \eqref{eq:Ntildefirstinequality}, \cite[Proposition $18$]{arulmanjul-5Sel} gives
\begin{align*}
\int_{na\in \mathcal{F} : ||\vec{s}||_\infty > Y^\eta} \left|\{\vec{v}\in B^\pm(n,a; Y)\cap (V(\Z)^\pm)^\irr : a_{12}(\vec{v}) = 0\}\right| \, dn \, d^*a
&\ll N(V(\Z)^\irr(0); Y) \\
&\ll Y^{{5}/{6} - \Omega(1)}
\end{align*}
where $V(\Z)^\irr(0)$ is the set of strongly irreducible $\vec{v} \in V(\Z)$ where a specific entry $a_{12}$ of $\vec{v}$ vanishes.
But from the proof of \cite[Proposition $18$]{arulmanjul-5Sel}, we have
$$\left| \{\vec{v}\in B^\pm(n,a; Y)\cap (V(\Z)^\pm)^\irr : a_{12}(\vec{v})\neq 0\} \right| = 0$$
if $Y^{1/{60}} w(a_{12})\ll 1$, where $w(a_{12}) = s_1^{-3} s_2^{-6} s_3^{-4} s_4^{-2} s_5^{-4} s_6^{-3} s_7^{-2} s_8^{-1}$ as defined in the proof of \cite[Proposition $18$]{arulmanjul-5Sel}. So
\begin{align}
&\int_{na\in \mathcal{F} : ||\vec{s}||_\infty > Y^\eta} |B^\pm(n,a; Y)\cap (V(\Z)^\pm)^\irr| \, dn \, d^*a
\nonumber\\
&\qquad \ll Y^{{5}/{6} - \Omega(1)} + \int_{na\in \mathcal{F} : ||\vec{s}||_\infty > Y^\eta \text{ and } Y^{1/60} w(a_{12})\gg 1} |B^\pm(n,a; Y)\cap (V(\Z)^\pm)^\irr| \, dn \, d^*a.
\label{eq:bound1}
\end{align}
Now just as in the deduction of $(15)$ from $(14)$ in \cite[p.~16]{arulmanjul-5Sel},  Davenport's Lemma (e.g., \cite[Proposition $17$]{arulmanjul-5Sel}) implies that when $Y^{{1}/{60}} w(a_{12})\gg 1$, we have 
$$\left|B^\pm(n,a; Y)\cap (V(\Z)^\pm)^\irr \right|
\leq \left|B^\pm(n,a; Y)\cap V(\Z)^\pm \right|
\ll \Vol(B^\pm(n,a; Y)).$$
But then
\begin{align}
&\int_{na\in \mathcal{F} : ||\vec{s}||_\infty > Y^\eta \text{ and } Y^{{1}/{60}} w(a_{12})\gg 1} \left|B^\pm(n,a; Y)\cap (V(\Z)^\pm)^\irr\right| \, dn \, d^*a 
\nonumber\\
&\qquad\qquad \ll \int_{na\in \mathcal{F} : ||\vec{s}||_\infty > Y^\eta \text{ and } Y^{{1}/{60}} w(a_{12})\gg 1} \Vol(B^\pm(n,a; Y)) \, dn \, d^*a 
\nonumber\\
&\qquad\qquad \ll \Vol(B^\pm(1,1; Y)) \int_{na\in \mathcal{F} : ||\vec{s}||_\infty > Y^\eta \text{ and } Y^{{1}/{60}} w(a_{12})\gg 1} dn \, d^*a  
\nonumber\\
&\qquad\qquad \ll Y^{{5}/{6} - \Omega(1)}. \label{eq:bound2}
\end{align}
Replacing the second summand in \eqref{eq:Ntildefirstinequality} with the bounds from \eqref{eq:bound1} and \eqref{eq:bound2} yields
\begin{equation} \label{eq:boundintermediatesum}
\widetilde{N}(V(\Z)^\pm; Y)
\leq \frac{1}{C_{G_0}}\int_{na\in \mathcal{F} : ||\vec{s}||_\infty\leq Y^\eta} \sum_{\vec{v}\in B^\pm(n,a; Y)\cap V(\Z)^\pm} O(1)^{\omega_{\geq 2}(\Delta(\vec{v}))} \, dn \, d^*a
+ O\left(Y^{{5}/{6} - \Omega(1)}\right).
\end{equation}

In order to bound the first summand, we wish to apply Lemma \ref{boundedness of averages of ok weights}. To check the nontrivial hypothesis \ref{the probability hypothesis} in Lemma \ref{boundedness of averages of ok weights}, we repeat the arguments at the end of the proof of Theorem \ref{moment theorem for other families}. A similar Hensel lifting argument proves the analogue of \eqref{number of solutions mod prime squares}. We obtain equidistribution in congruence classes as in \eqref{equidistribution in congruence classes} by applying Davenport's Lemma and using  $||\vec{s}||_\infty\leq Y^\eta$ to conclude that for all $m\leq Y^\eta$ and $\vec{v}_0\in V(\Z/m\Z)$, 
$$\left|\{\vec{v}\in B^\pm(n,a;Y)\cap V(\Z) : \vec{v}\equiv \vec{v}_0\pmod*{m}\} \right|
\ll m^{-\dim{V}} \left| B^\pm(n,a;Y)\cap V(\Z) \right|,$$
since both sides are $\asymp \Vol\left(\frac{B^\pm(n,a;Y) - \vec{v}_0}{m}\right) = m^{-\dim V}\Vol\left({B^\pm(n,a;Y)}\right)$.

Thus, we may now apply Lemma \ref{boundedness of averages of ok weights}, which shows that when $||\vec{s}||_\infty\leq Y^\eta$,
\begin{equation} \label{eq:bound sum}
\sum_{\vec{v}\in B^\pm(n,a; Y)\cap V(\Z)^\pm} O(1)^{\omega_{\geq 2}(\Delta(\vec{v}))}
\ll \sum_{\vec{v}\in B^\pm(n,a; Y)\cap V(\Z)^\pm} 1.
\end{equation}
Integrating \eqref{eq:bound sum} over $\mathcal{F}$ shows that the first summand of \eqref{eq:boundintermediatesum} is bounded by $N(V(\Z)^\pm; Y)$ (the unweighted count).\footnote{This application of Lemma \ref{boundedness of averages of ok weights} is where the $\zeta_K(2)$ term appears, but here we are taking $K = \Q$, so we may just use $1$ on the right-hand side of \eqref{eq:bound sum}.} We thus have
$$\widetilde{N}(V(\Z)^\pm; Y)\ll N(V(\Z)^\pm; Y) + O(Y^{{5}/{6} - \Omega(1)}).$$
But by \cite[Theorem $12$]{arulmanjul-5Sel} and \cite[Remark $13$]{arulmanjul-5Sel}, we have $$N(V(\Z)^\pm; Y)\ll Y^{{5}/{6}}\asymp X^{10}\asymp
\sum_{\substack{(A,B)\in \Funiv(\Z) \\ H(A,B)\leq X}} 1,$$ whence
$$\Avg_{\cE \in \FunivX(\Z)}\left(\sum_{1\neq \alpha\in \Sel_5(E/\Q)} O(1)^{\omega_{\geq 2}(\Delta(\alpha))}\right)
\ll 1$$ as desired.
\end{proof}

\subsection{Weighted averages of $2$- and $3$-Selmer elements of elliptic curves in families with marked points}

We now prove the equality cases of Theorem \ref{moment theorem for other families} for $\FF_1$ and $\FF_2$, using nearly identical arguments as in Section \ref{sec:weightuniv}.

\begin{theorem}\label{the two selmer equality case}
Let $(\mathscr{G},\moment)= (\FF_1,\log_2{3})$ or $(\FF_2, 1)$. Let $\FF\subseteq \mathscr{G}$ be a subfamily of positive lower density (ordering by height). Let $S$ be a finite set of places of $\Q$. Then:
$$\Avg_{\FF}( |\cE(\Z[S^{-1}])|^\moment) \ll_{\FF} 1,$$
where the average is taken over all $\cE\in \FF$ ordered by height.
\end{theorem}

Just as in Section \ref{sec:weightuniv}, after repeating the arguments in the beginning of the proof of Theorem \ref{thm:momentsbound}, Theorem \ref{the two selmer equality case} follows from a weighted average bound:

\begin{theorem}\label{weighted two selmer average bound}
Let $(\GG, d) = (\FF_1, 3)$ or $(\FF_2, 2)$. Then $$\Avg_{\GG}\left(O(1)^{\omega_{\geq 2}(\Delta_{\cE})} |\Sel_d(E)|\right)\ll 1,$$ where the average is taken over all $\cE\in \GG$ ordered by height.
\end{theorem}

\begin{proof}
This proof is completely analogous to the proof of Theorem \ref{weighted five selmer average bound}, except with references to \cite{cofreecounting} instead of \cite{arulmanjul-5Sel}. 
For the remainder of the proof, we use the notation of \cite{cofreecounting}. For example, when $\GG = \FF_1$, let the group $G$ be the quotient of $\SL_3^3$ by the stabilizer $\mu_3^2$ and $V$ be the representation $3 \otimes 3 \otimes 3$ of $G$; when $V = \FF_2$, let $G$ be the quotient of $\SL_4^2$ by the analogous $\mu_2^3$ and $V$ be the representation $2 \otimes 2 \otimes 2 \otimes 2$ of $G$ (see Cases 4 and 7 in \cite[Theorem 3.1]{cofreecounting}). Let $n$ be the dimension of $V$ and $k$ be the degree of the discriminant, so $(n,k) = (27, 36)$ and $(16, 24)$ for $\FF_1$ and $\FF_2$, respectively.
For elliptic curves $E$ whose affine Weierstrass models are in $\FF_1$ and $\FF_2$, the notation $S'(E)$ denotes a subgroup of the $d$-Selmer group $\Sel_d(E/\Q)$ that is generated by the marked points; for 100\% of the curves in $\FF_1$ (resp., $\FF_2$), this group has order $3$ (resp., $4$).

The subset of $V(\R)$ with nonzero discriminant is split into $N$ connected components denoted $V^{(i)}$ for $1 \leq i \leq N$. Each contains a fundamental set $R^{(i)}$ defined in \cite[\S 5.1]{cofreecounting}. Let $\mathcal{F}$ be a fundamental domain for the left action of $G(\Z)$ on $G(\R)$ that is Haar-measurable and contained in a standard Siegel set, written as a subset of a product of the same sort of fundamental domain $\mathcal{F}_j$ for $\SL_j$ ($j = 3$ or $2$ for $\FF_1$ and $\FF_2$, respectively). We may explicitly specify
\begin{align*}
\mathcal{F}_2 &= \{\nu \alpha \kappa : \nu(x) \in N'(\alpha), \alpha(s) \in A', \kappa \in K \} \\
\mathcal{F}_3 &= \{\nu \alpha \kappa : \nu(x,x',x'') \in N'(\alpha), \alpha(t,u) \in A', \kappa \in K \}
\end{align*}
where $A'$ is a certain subset of the appropriate tori (indexed by $s$ or $(t,u)$, resp.), $N'(\alpha)$ is a subset of lower triangular matrices, and $K = \SO_j(\R)$, as in \cite[\S 5.2]{cofreecounting}.
As before, let $G_0 \subset G(\R)$ be a compact semialgebraic left $K$-invariant set that is the closure of an open nonempty set. Let $E^{(i)}(\nu,\alpha,Y)$ denote the multiset $\nu \alpha G_0 \cdot R^{(i)} \cap \{\vec{v} \in V^{(i)} : H(\vec{v}) < X\}$.

We now begin the proof. Because $\Avg_{\cE\in \GG : H(\cE)\leq X}\left(O(1)^{\omega_{\geq 2}(\Delta_{\cE})}\right)\ll 1$ (as shown in the proof of Theorem \ref{moment theorem for other families} using Lemma \ref{boundedness of averages of ok weights}), we reduce to showing
\begin{equation}
\label{eq:takeoutS'}
\Avg_{\cE\in \GG : H(\cE)\leq X}\left(\sum_{\vec{v}\in \Sel_d(E) - S'(E)} O(1)^{\omega_{\geq 2}(\Delta(\vec{v}))}\right)\ll 1.
\end{equation}
We are removing the orbits corresponding to the subgroup $S'(E)$ in $\Sel_d(E)$ because we only wish to modify the count for {\em irreducible} orbits.
We bound the number of irreducible $G(\Q)$-orbits by the number of irreducible $G(\Z)$-orbits (here, we drop the constants $C_{G_0}^{(i)}\asymp 1$):
\begin{align}
&\Avg_{\cE\in \GG : H(\cE)\leq X}\left(\sum_{\vec{v}\in \Sel_d(E) - S'(E)} O(1)^{\omega_{\geq 2}(\Delta(\vec{v}))}\right)
\nonumber \\
& \qquad \qquad \ll \sum_{i = 1}^N \int_{\nu\alpha\in N(\alpha)A'} \sum_{\vec{v}\in E^{(i)}(\nu, \alpha, Y)\cap V(\Z)^\irr} O(1)^{\omega_{\geq 2}(\Delta(\vec{v}))} \, dg
\label{eq:familiesZorbits}
\end{align}
with $Y\asymp X$, as in the derivation of \cite[(22) in \S 7.1]{cofreecounting} (but replacing the weight function $1$ by $O(1)^{\omega_{\geq 2}(\Delta(\vec{v}))}$ and noting that we are using the same height).
Let $\widetilde{N}(V(\Z)^{(i)}; Y)$ denote the $i$th summand in \eqref{eq:familiesZorbits}.
Writing $\vec{\sigma} := \vec{s}$ or $(\vec{t}, \vec{u})$ (where, e.g., $s_j$ corresponds to the torus parameter $s$ in the $j$th factor of $\mathcal{F}_2$ in $\mathcal{F} = \mathcal{F}_2^4$), we split the integral to obtain
\begin{align}
\widetilde{N}(V(\Z)^{(i)}; Y)
&= \left(\int_{\nu\alpha\in N(\alpha)A' : ||\vec{\sigma}||_\infty\leq Y^\eta} + \int_{\nu\alpha\in N(\alpha)A' : ||\vec{\sigma}||_\infty > Y^\eta}\right) \sum_{\vec{v}\in E^{(i)}(\nu, \alpha, Y)\cap V(\Z)^\irr} O(1)^{\omega_{\geq 2}(\Delta(\vec{v}))} \, dg
\nonumber\\
&\leq \int_{\nu\alpha\in N(\alpha)A' : ||\vec{\sigma}||_\infty\leq Y^\eta} \sum_{\vec{v}\in E^{(i)}(\nu, \alpha, Y)\cap V(\Z)} O(1)^{\omega_{\geq 2}(\Delta(\vec{v}))} \, dg
\nonumber\\
&\quad\quad + O\left(Y^{o(1)} \int_{na\in N(\alpha)A' : ||\vec{\sigma}||_\infty > Y^\eta} \left|E^{(i)}(\nu, \alpha, Y)\cap V(\Z)^\irr\right| \, dg\right),
\label{eq:boundNtilde1family}
\end{align}
where $\eta\in \R^+$ with $\eta\asymp 1$ is a small constant.

We use \cite[Proposition $7.3$]{cofreecounting} to see that
\begin{align*}
\int_{\nu\alpha\in \mathcal{F} : ||\vec{\sigma}||_\infty > Y^\eta} \left|\{\vec{v}\in E^{(i)}(\nu, \alpha, Y)\cap V(\Z)^\irr : b_{\mathrm{min}}(\vec{v}) = 0\} \right| \, dg
&\ll Y^{{n}/{k} - \Omega(1)}
\end{align*}
where $b_\mathrm{min}$ is a specific (the $111$ or $1111$) entry of $\vec{v} \in V$.
Again, as seen in $(31)$ of \cite{cofreecounting}, 
$$\left|\{\vec{v}\in E^{(i)}(\nu, \alpha, Y)\cap V(\Z)^\irr : b_{\mathrm{min}}(\vec{v})\neq 0\}\right| = 0$$ 
when $Y^{{1}/{k}}w(b_{\mathrm{min}})\ll 1$, where $w(b_{\mathrm{min}}) = \prod_{j=1}^3 t_j^{-2} u_j^{-1}$ or $\prod_{j=1}^4 s_j^{-1}$ for $\GG = \FF_1$ or $\FF_2$, respectively. Thus, we may write
\begin{align*}
&\int_{\nu\alpha\in N(\alpha) A' : ||\vec{\sigma}||_\infty > Y^\eta} \left|E^{(i)}(\nu, \alpha, Y)\cap V(\Z)^\irr \right| \, dg
\\
&\qquad\qquad \ll Y^{{n}/{k} - \Omega(1)} + \int_{\nu\alpha\in N(\alpha) A' : ||\vec{\sigma}||_\infty > Y^\eta, Y^{{1}/{k}} w(b_{\mathrm{min}})\gg 1} \left|E^{(i)}(\nu, \alpha, Y)\cap V(\Z)^\irr \right| \, dg.
\end{align*}\noindent
By Davenport's Lemma (e.g., \cite[Lemma $7.2$]{cofreecounting}), when $Y^{{1}/{k}}w(b_{\mathrm{min}})\gg 1$, $$\left|E^{(i)}(\nu, \alpha, Y)\cap V(\Z)^\irr\right|
\leq \left| E^{(i)}(\nu, \alpha, Y)\cap V(\Z)^\irr \right|
\ll \Vol(E^{(i)}(\nu, \alpha, Y)),$$
so a computation like \eqref{eq:bound2} gives
$$\int_{\nu\alpha\in N(\alpha) A' : ||\vec{\sigma}||_\infty > Y^\eta, Y^{{1}/{k}} w(b_{\mathrm{min}})\gg 1} \left|E^{(i)}(\nu, \alpha, Y)\cap V(\Z)^\irr \right| \, dg\ll Y^{{n}/{k} - \Omega(\eta)}.$$
Combining this with \eqref{eq:boundNtilde1family} yields
$$\widetilde{N}(V(\Z)^{(i)}; Y) \leq \int_{\nu\alpha\in N(\alpha) A' : ||\vec{\sigma}||_\infty\leq Y^\eta} \sum_{\vec{v}\in E^{(i)}(\nu, \alpha, Y)\cap V(\Z)} O(1)^{\omega_{\geq 2}(\Delta(\vec{v}))} \, dg + O\left(Y^{{n}/{k} - \Omega(1)}\right).$$

Finally, we conclude by applying Lemma \ref{boundedness of averages of ok weights}, again verifying the hypotheses by repeating the Hensel lifting argument proving the analogue of \eqref{number of solutions mod prime squares} and modifying the equidistribution argument proving \eqref{equidistribution in congruence classes} by applying Davenport's Lemma and using the fact that $||\vec{\sigma}||_\infty\leq Y^\eta$. By Lemma \ref{boundedness of averages of ok weights}, when $||\vec{\sigma}||_\infty\leq Y^\eta$, 
$$\sum_{\vec{v}\in E^{(i)}(\nu, \alpha, Y)\cap V(\Z)} O(1)^{\omega_{\geq 2}(\Delta(\vec{v}))}
\ll \sum_{\vec{v}\in E^{(i)}(\nu, \alpha, Y)\cap V(\Z)} 1,$$
which gives
$$\widetilde{N}(V(\Z)^{(i)}; Y)\ll N(V(\Z)^{(i)}; Y) + O(Y^{{n}/{k} - \Omega(1)}),$$
where $N(V(\Z)^{(i)}; Y)$ denotes the number of integral points in $V^{(i)}$ of height less than $Y$.
But by \cite[Theorem $7.1$]{cofreecounting} and \cite[Theorem $9.1$]{cofreecounting},
$$N(V(\Z)^{(i)}; Y)\ll Y^{{n}/{k}}\asymp X^{{n}/{k}}\asymp \sum_{\cE\in \GG : H(\cE)\leq X} 1,$$ so 
we have bounded each summand of \eqref{eq:familiesZorbits} and thus obtain the desired bound \eqref{eq:takeoutS'}.
\end{proof}

\vspace{2\baselineskip}

\bibliography{intpts}
\bibliographystyle{amsalpha2}

\end{document}